\newtheorem{theorem}{Theorem}
\newtheorem{corollary}[theorem]{Corollary}
\newtheorem{example}[theorem]{Example}
\newtheorem{proposition}[theorem]{Proposition}
\newenvironment{proof}[1][Proof]{\noindent\textbf{#1.} }{\ \rule{0.5em}{0.5em}}
\begin{document}

\title{A new characterization of canal surfaces with parallel transport frame in Euclidean space $\mathbb{E}^{4}$}
\author{\.{I}lim K\.{I}\c{S}\.{I}, G\"{u}nay \"{O}ZT\"{U}RK \\
%EndAName
Department of Mathematics, Kocaeli University, Kocaeli, Turkey \\
ilim.ayvaz@kocaeli.edu.tr, ogunay@kocaeli.edu.tr\\
Kadri ARSLAN \\
%EndAName
Department of Mathematics, Uluda\u{g} University, Bursa, Turkey\\
arslan@uludag.edu.tr}
\maketitle

\begin{abstract}
In this study, we consider canal surfaces according to parallel
transport frame in Euclidean space $\mathbb{E}^{4}$. The curvature
properties of these surfaces are investigated with respect to
$k_{1}$, $k_{2}$ and $k_{3}$ which are principal curvature functions
according to parallel transport frame. We also give an example of
canal surfaces in $\mathbb{E}^{4}.$ Further, we point out that if
spine curve $\gamma $ is a straight line, then $M$ is a Weingarten
canal surface and also $M$ is a linear Weingarten tube surface.
Finally, the visualization of the projections of canal surfaces in
$\mathbb{E}^{3}$ are shown.
\end{abstract}

\noindent \textbf{Keywords:} Parallel transport frame, Gaussian curvature,
mean curvature\newline
\noindent \textbf{Classification:} [2010] Primary 53C40; Secondary 53C42

\section{Introduction}

Given a space curve $\gamma \left( u\right) $ called spine curve, a
canal surface associated to this curve is defined as a surface swept
by a family of spheres of varying radius $r(u)$. If $r(u)$ is
constant, the canal surface is called a tube or a pipe surface.

The canal surface can be thought out as a generalization of the
classical consept of an offset of a plane curve. In \cite{FN} and
\cite{FN1}, the analysis and algebraic features of offset curves are
discussed thoroughly. In \cite{C}, do Carmo gives some geometrical
properties of tube surfaces and by means of these surfaces proves
two very important theorems in differential geometry related to the
total curvature of space curves, named as Fenchel's theorem and the
Fary-Milnor theorem.

Apart from being used in pure mathematics, canal surfaces are widely used in
many areas especially in CAGD, e.g. construction of blending surfaces, i.e.
canal surface with a rational radius, shape reconstruction or robotic path
planning (see, \cite{FS}, \cite{SB}, \cite{WB}). Greater part of the studies
on canal surfaces within the CAGD context is related to the search of canal
surfaces with rational spine curve and rational radius function. Canal
surfaces are also useful in visualising long thin objects such as poles, 3D
fonts, brass instruments or internal organs of the body in solid/surface
modeling and CG/CAD.

Tori, Dupin cyclids in \cite{S} and tube surfaces in \cite{MPSY} are the
special types of the canal surfaces.

Given a surface $M$ in Euclidean space $\mathbb{E}^{4}$ and its two
principal curvatures $k_{1}$ and $k_{2},$ $M$ is a Weingarten surface under
the condition that there is a smooth relation $U(k_{1},k_{2})$ $=0.$ If $K$
and $H$ denote respectively the Gaussian curvature and the mean curvature of
$M$, $U(k_{1},k_{2})=0$ refers that $\Phi (K,H)=0$ which is equivalent to
the vanishing of the corresponding Jacobian determinant, i.e. $\left\vert
\frac{\partial \left( K,H\right) }{\partial \left( u,v\right) }\right\vert
=0.$ Also, if the surface satisfies a linear equation with respect to $K$
and $H$, that is, $aK+bH=c$; $a,b,c\in \mathbb{R}$, $((a,b,c)\neq (0,0,0))$,
then it is called as a linear Weingarten surface \cite{RY}.

Frenet-Serret frame gives way to the study of curves in classical
differential geometry in Euclidean space. However, the Frenet frame cannot
be constructed at the points in which curvature vanishes. Hence, an
alternative frame is needed. In \cite{B}, Bishop defined a new frame for a
curve and called it Bishop frame, which is well defined even if the curve's
second derivative in $3$-dimensional Euclidean space vanishes. In \cite{B,
KB} the advantages of the Bishop frame and the comparison of Bishop frame
with the Frenet frame in Euclidean $3$-space were given . Euclidean $4$%
-space $\mathbb{E}^{4}$ has the same problem as Euclidean 3-space. That is,
one of the $i-th$ ($1<i<4$) derivatives of the curve may be zero.

In \cite{GBGEY} using the similar idea authors considered such curves and
construct an alternative frame. They gave parallel transport frame of a
curve and introduced the relations between the frame and Frenet frame
of the curve in $4$-dimensional Euclidean space $\mathbb{E}^{4}$. They
generalized the notion which is well known in Euclidean $3$-space for $4$%
-dimensional Euclidean space $\mathbb{E}^{4}$.

In \cite{BABO} authors considered canal surfaces imbedded in an Euclidean
space of four dimensions. They investigated the curvature properties of
these surface with respect to the variation of the normal vectors and
curvature ellipse. They also gave some special examples of canal surfaces in
$\mathbb{E}^{4}$. Further, they gave necessary and sufficient condition for
canal surfaces in $\mathbb{E}^{4}$ to become superconformal.

In the present study, we consider canal surfaces imbedded in Euclidean $4$%
-space $\mathbb{E}^{4}$ with the spine curve $\gamma $ given with parallel
transport frame in $\mathbb{E}^{4}.$

This paper is organized as follows: Section $2$ gives some basic concepts of
the Frenet frame and parallel transport frame of a curve in $\mathbb{E}%
^{4}.$ Also this section provides some basic properties of canal surfaces in
$\mathbb{E}^{4}$ and the structure of their curvatures. Section $3$ tells
about the canal surfaces and some curvature conditions of these types of
surfaces in $\mathbb{E}^{4}$ according to parallel transport frame. In
section $4$, the visualization of canal surfaces are presented. All the
figures in this paper were generated via the Maple programme.

\section{Basic Concepts}

Let $\gamma =\gamma (s):I\rightarrow \mathbb{E}^{4}$ be a unit speed curve
in the Euclidean space $\mathbb{E}^{4}$, where $I$ is interval in $\mathbb{R}
$. Then the derivatives of the Frenet frame vectors of $\gamma $
(Frenet-Serret formula) are as follows;%
\begin{equation*}
\left[
\begin{array}{c}
T^{\prime } \\
N^{\prime } \\
B_{1}^{\prime } \\
B_{2}^{\prime }%
\end{array}%
\right] =\left[
\begin{array}{cccc}
0 & \kappa & 0 & 0 \\
-\kappa & 0 & \tau & 0 \\
0 & -\tau & 0 & \sigma \\
0 & 0 & -\sigma & 0%
\end{array}%
\right] \left[
\begin{array}{c}
T \\
N \\
B_{1} \\
B_{2}%
\end{array}%
\right] ,
\end{equation*}%
where $\left\{ T,N,B_{1},B_{2}\right\} $ is the Frenet frame of $\gamma $,
and $\kappa $, $\tau $ and $\sigma $ are principal curvature functions
according to Frenet frame of the curve $\gamma $, respectively.

In \cite{GBGEY}, authors used the tangent vector $T(s)$ and three relatively
parallel vector fields $M_{1}(s)$, $M_{2}(s)$, and $M_{3}(s)$ to construct
an alternative frame. They called this frame a parallel transport frame
along the curve $\gamma $. Then, they gave the following theorem for a
parallel transport frame.

\begin{theorem}
\cite{GBGEY} Let $\left\{ T,N,B_{1},B_{2}\right\} $ be the Frenet frame and $%
\left\{ T,M_{1},M_{2},M_{3}\right\} $ the parallel transport frame along a
unit speed curve $\gamma =\gamma (s):I\rightarrow \mathbb{E}^{4}$. The
relation between these frames may be expressed as%
\begin{eqnarray*}
T &=&T(s) \\
N &=&\cos \theta (s)\cos \psi (s)M_{1}+(-\cos \phi (s)\sin \psi (s)+\sin
\phi (s)\sin \theta (s)\cos \psi (s))M_{2} \\
&&+(\sin \phi (s)\sin \psi (s)+\cos \phi (s)\sin \theta (s)\cos \psi
(s))M_{3} \\
B_{1} &=&\cos \theta (s)\sin \psi (s)M_{1}+(\cos \phi (s)\cos \psi (s)+\sin
\phi (s)\sin \theta (s)\sin \psi (s))M_{2} \\
&&+(-\sin \phi (s)\cos \psi (s)+\cos \phi (s)\sin \theta (s)\sin \psi
(s))M_{3} \\
B_{2} &=&-\sin \theta (s)M_{1}+\sin \phi (s)\cos \theta (s)M_{2}+\cos \phi
(s)\cos \theta (s)M_{3},
\end{eqnarray*}%
where $\theta $, $\psi $ and $\phi $ are the Euler angles. Then the
alternative parallel frame equations are%
\begin{equation}
\left[
\begin{array}{c}
T~^{\prime } \\
M_{1}^{\prime } \\
M_{2}^{\prime } \\
M_{3}^{\prime }%
\end{array}%
\right] =\left[
\begin{array}{cccc}
0 & k_{1} & k_{2} & k_{3} \\
-k_{1} & 0 & 0 & 0 \\
-k_{2} & 0 & 0 & 0 \\
-k_{3} & 0 & 0 & 0%
\end{array}%
\right] \left[
\begin{array}{c}
T \\
M_{1} \\
M_{2} \\
M_{3}%
\end{array}%
\right] ,  \label{a01}
\end{equation}%
where $k_{1}$, $k_{2}$ and $k_{3}$ are principal curvature functions
according to parallel transport frame of the curve $\gamma $ and their
expressions are as follows:%
\begin{eqnarray*}
k_{1} &=&\kappa \cos \theta \cos \psi , \\
k_{2} &=&\kappa (-\cos \phi \sin \psi +\sin \phi \sin \theta \cos \psi ), \\
k_{3} &=&\kappa (\sin \phi \sin \psi +\cos \phi \sin \theta \cos \psi ),
\end{eqnarray*}%
where $\theta ^{\prime }=\frac{\sigma }{\sqrt{\kappa ^{2}+\tau ^{2}}}$, $%
\psi ^{\prime }=-\tau -\sigma \frac{\sqrt{\sigma ^{2}-\theta ^{^{\prime 2}}}%
}{\sqrt{\kappa ^{2}+\tau ^{2}}}$, $\phi ^{\prime }=-\frac{\sqrt{\sigma
^{2}-\theta ^{^{\prime 2}}}}{\cos \theta }$ and the following equalities%
\begin{eqnarray*}
\kappa  &=&\sqrt{k_{1}^{2}+k_{2}^{2}+k_{3}^{2}}, \\
\tau  &=&-\psi ^{\prime }+\phi ^{\prime }\sin \theta , \\
\sigma  &=&\frac{\phi ^{\prime }}{\sin \psi }, \\
\phi ^{\prime }\cos \theta +\theta ^{\prime }\cot \psi  &=&0
\end{eqnarray*}%
are hold.
\end{theorem}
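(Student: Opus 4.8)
The plan is to manufacture the parallel transport frame from the Frenet frame by a rotation of the three--dimensional normal bundle, and then to read off the curvature relations by differentiation. Seek the relatively parallel normal fields in the form $(M_{1},M_{2},M_{3})^{t}=B(s)\,(N,B_{1},B_{2})^{t}$ with $B(s)\in SO(3)$ to be determined. From the Frenet--Serret equations the normal part of each $M_{i}'$ is controlled by the skew--symmetric normal--connection matrix
\[
\Omega=\begin{pmatrix}0 & \tau & 0\\ -\tau & 0 & \sigma\\ 0 & -\sigma & 0\end{pmatrix},
\]
so demanding that every $M_{i}'$ be proportional to $T$ (the defining property of a parallel frame) amounts exactly to the linear matrix ODE $B'=-B\,\Omega$. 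Prescribing any initial value $B(s_{0})\in SO(3)$, this ODE has a unique solution, and because $\Omega$ is skew one checks immediately that $BB^{t}$ is constant, hence $B(s)\in SO(3)$ for all $s$. Put $k_{i}:=\langle T',M_{i}\rangle=-\langle T,M_{i}'\rangle$; then the vanishing of the normal part of $M_{i}'$ gives $M_{i}'=-k_{i}T$, while $T'=\sum_{i}\langle T',M_{i}\rangle M_{i}=k_{1}M_{1}+k_{2}M_{2}+k_{3}M_{3}$. This is precisely the system $(\ref{a01})$.

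I would then parametrise $A:=B^{-1}=B^{t}\in SO(3)$ by the three Euler angles $\theta(s),\psi(s),\phi(s)$ as a product of plane rotations in the classical way; reading the rows of $A$ off from $(N,B_{1},B_{2})^{t}=A\,(M_{1},M_{2},M_{3})^{t}$ reproduces verbatim the three displayed expansions of $N$, $B_{1}$, $B_{2}$ in the parallel frame. Comparing the two expressions for $T'$ --- namely $\kappa N$ from Frenet--Serret and $k_{1}M_{1}+k_{2}M_{2}+k_{3}M_{3}$ from $(\ref{a01})$ --- and inserting $N=\cos\theta\cos\psi\,M_{1}+\cdots$ gives at once
\[
k_{1}=\kappa\cos\theta\cos\psi,\qquad k_{2}=\kappa(-\cos\phi\sin\psi+\sin\phi\sin\theta\cos\psi),\qquad k_{3}=\kappa(\sin\phi\sin\psi+\cos\phi\sin\theta\cos\psi);
\]
and since $|N|=1$ with $\{M_{1},M_{2},M_{3}\}$ orthonormal, $\kappa=\sqrt{k_{1}^{2}+k_{2}^{2}+k_{3}^{2}}$.

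It remains to identify the angles. Differentiating $(N,B_{1},B_{2})^{t}=A\,(M_{1},M_{2},M_{3})^{t}$ and using $M_{i}'=-k_{i}T$ together with the normal part of Frenet--Serret recasts $B'=-B\Omega$ as $A'=\Omega A$, i.e.\ $A'A^{t}=\Omega$. Writing $A'=A_{\theta}\theta'+A_{\psi}\psi'+A_{\phi}\phi'$ and equating the three independent off--diagonal entries of the (automatically skew) matrix $A'A^{t}$ with those of $\Omega$ yields three scalar relations: one involving neither $\tau$ nor $\sigma$, namely the compatibility condition $\phi'\cos\theta+\theta'\cot\psi=0$, together with $\tau=-\psi'+\phi'\sin\theta$ and $\sigma=\phi'/\sin\psi$. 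Solving this system for $(\theta',\psi',\phi')$ on the region where $\cos\theta\neq0$ and $\sin\psi\neq0$, and simplifying with the standard trigonometric identities (and with $\kappa=\sqrt{k_{1}^{2}+k_{2}^{2}+k_{3}^{2}}$ and the formulas for the $k_{i}$ just obtained), produces the closed forms $\theta'=\sigma/\sqrt{\kappa^{2}+\tau^{2}}$, $\psi'=-\tau-\sigma\sqrt{\sigma^{2}-(\theta')^{2}}/\sqrt{\kappa^{2}+\tau^{2}}$ and $\phi'=-\sqrt{\sigma^{2}-(\theta')^{2}}/\cos\theta$, which completes the statement.

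The conceptual content --- existence of the parallel frame, the structure equations $(\ref{a01})$, the formulas for $k_{1},k_{2},k_{3}$, and $\kappa=\sqrt{k_{1}^{2}+k_{2}^{2}+k_{3}^{2}}$ --- is essentially immediate from orthonormality and the skew--symmetry of the normal connection, so the real obstacle is the last step: differentiating the Euler rotation cleanly and then carrying out the trigonometric bookkeeping needed to invert the $3\times3$ system and to recognise the combinations $\sqrt{\kappa^{2}+\tau^{2}}$ and $\sqrt{\sigma^{2}-(\theta')^{2}}$. A practically more transparent route, essentially the one in \cite{GBGEY}, is to build $A$ as three successive plane rotations and to impose the parallelism condition one rotation at a time --- mirroring the classical passage from the Frenet to the Bishop frame in $\mathbb{E}^{3}$ --- which also makes it easy to keep track of the open set ($\cos\theta\neq0$, $\sin\psi\neq0$) on which the closed forms are valid.
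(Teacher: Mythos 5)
First, a point of order: the paper never proves this statement --- it is quoted verbatim from \cite{GBGEY} as background material, so there is no in-paper argument to measure yours against. Judged on its own terms, the structural half of your proposal is correct and is the standard generalization of Bishop's construction: writing $(M_{1},M_{2},M_{3})^{t}=B(s)\,(N,B_{1},B_{2})^{t}$, parallelism of the $M_{i}$ is exactly the linear ODE $B'=-B\Omega$ with $\Omega$ the skew normal-connection matrix, existence follows from the standard theory of linear systems, orthonormality from $\tfrac{d}{ds}(BB^{t})=0$, and the equations (\ref{a01}), the expressions $k_{1}=\kappa\cos\theta\cos\psi$ etc., and $\kappa=\sqrt{k_{1}^{2}+k_{2}^{2}+k_{3}^{2}}$ all drop out of $T'=\kappa N$ and orthonormality exactly as you say.

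The gap is in the last step. You assert that equating the off-diagonal entries of $A'A^{t}=\Omega$ yields precisely the three relations $\tau=-\psi'+\phi'\sin\theta$, $\sigma=\phi'/\sin\psi$, $\phi'\cos\theta+\theta'\cot\psi=0$, and that ``solving and simplifying'' then produces the closed forms $\theta'=\sigma/\sqrt{\kappa^{2}+\tau^{2}}$, $\psi'=-\tau-\sigma\sqrt{\sigma^{2}-(\theta')^{2}}/\sqrt{\kappa^{2}+\tau^{2}}$, $\phi'=-\sqrt{\sigma^{2}-(\theta')^{2}}/\cos\theta$. Neither claim is carried out, and the second cannot be waved through: direct elimination from the three relations gives $\phi'=\sigma\sin\psi$, $\theta'=-\sigma\sin\psi\cos\theta\tan\psi$ and $\psi'=-\tau+\sigma\sin\psi\sin\theta$, which are not visibly the stated closed forms; worse, substituting the stated expressions for $\theta'$, $\psi'$, $\phi'$ back into $\tau=-\psi'+\phi'\sin\theta$ forces $\tan\theta=\theta'$ (unless $\sigma^{2}=(\theta')^{2}$), so the promised ``trigonometric bookkeeping'' is at best delicate and at worst impossible without further hypotheses or a different sign convention. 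To make this a proof you would need to fix the explicit Euler-angle factorization of $A$ that yields the displayed expansions of $N$, $B_{1}$, $B_{2}$, differentiate it, and verify the angle ODEs entry by entry; alternatively, restrict your claim to the part you have genuinely established (existence of the frame, (\ref{a01}), the $k_{i}$, and $\kappa$) and cite \cite{GBGEY} for the remaining formulas, as the paper itself does.
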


Let $M$ be a regular surface in $\mathbb{E}^{4}$ given with the
parametrization $X(u,v)$ : $(u,v)\in D\subset \mathbb{E}^{2}$. The tangent
space of $M$ at an arbitrary point $p=X(u,v)$ is spanned by the vectors $%
X_{u}$ and $X_{v}$. The coefficients of the first fundamental form of $M$
are computed by
\begin{equation}
E=\langle X_{u},X_{u}\rangle ,F=\left\langle X_{u},X_{v}\right\rangle
,G=\left\langle X_{v},X_{v}\right\rangle ,  \label{A1}
\end{equation}%
where $\left\langle ,\right\rangle $ is the Euclidean inner product. We
assume that $W^{2}=EG-F^{2}\neq 0,$ i.e. the surface patch $X(u,v)$ is
regular.

For each $p$ in $M$, consider the decomposition $T_{p}\mathbb{E}%
^{4}=T_{p}M\oplus T_{p}^{\perp }M$ where $T_{p}^{\perp }M$ is the orthogonal
component of $T_{p}M$ in $\mathbb{E}^{4}.$ Let $\overset{\sim }{\nabla }$ be
the Riemannian connection of $\mathbb{E}^{4}$.

The induced Riemannian connection on $M$ for any given local vector fields $%
X_{1}$, $X_{2}$ tangent to $M$,  is defined by
\begin{equation}
\nabla _{X_{1}}X_{2}=(\widetilde{\nabla }_{X_{1}}X_{2})^{T},  \label{A2}
\end{equation}%
where $T$ represents the tangential component.

Let $\chi (M)$ and $\chi ^{\perp }(M)$ be the spaces of the smooth vector
fields tangent to $M$ and normal to $M$, respectively. The second
fundamental map is defined as follows:
\begin{eqnarray}
h &:&\chi (M)\times \chi (M)\rightarrow \chi ^{\perp }(M)  \notag \\
h(X_{i},X_{_{j}}) &=&\widetilde{\nabla }_{X_{_{i}}}X_{_{j}}-\nabla
_{X_{_{i}}}X_{_{j}}\text{ \ \ \ }1\leq i,j\leq 2.  \label{A3}
\end{eqnarray}%
This map is well-defined, symmetric and bilinear.

\begin{proposition}
\cite{Be} Let $M\subset E^{4}$ be a surface in $\mathbb{E}^{4}$ given with
the paramatrization $X\left( u,v\right) .$ If the coefficient of the first
fundamental form $F=0,$ the second fundamental forms of $M$ becomes%
\begin{eqnarray}
h(X_{u},X_{u}) &=&X_{uu}-\frac{1}{E}\left\langle \text{ }X_{uu},X_{u}\right%
\rangle X_{u}+\frac{1}{G}\left\langle X_{uv},X_{u}\right\rangle X_{v},
\notag \\
h(X_{u},X_{v}) &=&X_{uv}-\frac{1}{E}\left\langle X_{uv},X_{u}\right\rangle
X_{u}-\frac{1}{G}\left\langle X_{uv},X_{v}\right\rangle X_{v},  \label{F9} \\
h(X_{v},X_{v}) &=&X_{vv}+\frac{1}{E}\left\langle X_{uv},X_{v}\right\rangle
X_{u}-\frac{1}{G}\left\langle X_{vv},X_{v}\right\rangle X_{v}.  \notag
\end{eqnarray}
\end{proposition}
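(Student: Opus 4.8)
The plan is to read $h$ off directly from its definition \eqref{A3}, using the orthogonal splitting of the tangent plane supplied by the hypothesis $F=0$. Since the ambient connection on $\mathbb{E}^{4}$ is flat, $\widetilde{\nabla}_{X_{i}}X_{j}=X_{ij}$, and by \eqref{A2} the induced connection is the tangential part, $\nabla_{X_{i}}X_{j}=(X_{ij})^{T}$. Hence \eqref{A3} reduces to $h(X_{i},X_{j})=X_{ij}-(X_{ij})^{T}$ for $1\le i,j\le 2$, so everything comes down to computing three tangential projections.

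First I would record that, because $F=\langle X_{u},X_{v}\rangle=0$, the frame $\{X_{u},X_{v}\}$ is orthogonal, so for any vector $Z$ tangent to $\mathbb{E}^{4}$
\begin{equation*}
(Z)^{T}=\frac{\langle Z,X_{u}\rangle}{E}X_{u}+\frac{\langle Z,X_{v}\rangle}{G}X_{v}.
\end{equation*}
Substituting $Z=X_{uv}$ gives the stated formula for $h(X_{u},X_{v})$ at once, while $Z=X_{uu}$ and $Z=X_{vv}$ give expressions of the right shape except that the coefficient of $X_{v}$ in $h(X_{u},X_{u})$ comes out as $-\langle X_{uu},X_{v}\rangle/G$ and the coefficient of $X_{u}$ in $h(X_{v},X_{v})$ as $-\langle X_{vv},X_{u}\rangle/E$.

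It then remains to rewrite those two coefficients. Differentiating $F=\langle X_{u},X_{v}\rangle\equiv 0$ with respect to $u$ yields $\langle X_{uu},X_{v}\rangle=-\langle X_{uv},X_{u}\rangle$, and differentiating with respect to $v$ yields $\langle X_{vv},X_{u}\rangle=-\langle X_{uv},X_{v}\rangle$; inserting these turns the two expressions into exactly the forms in \eqref{F9}. I do not anticipate any genuine difficulty, since the argument is a short computation. The only points that need care are that $F=0$ is used twice — once for the clean projection formula for $(Z)^{T}$, and once through the first derivatives of $F$ to rewrite the mixed second-derivative inner products — and that signs must be tracked when making those substitutions, so that, for instance, $-\langle X_{uu},X_{v}\rangle/G$ becomes $+\langle X_{uv},X_{u}\rangle/G$ in $h(X_{u},X_{u})$.
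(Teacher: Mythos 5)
Your argument is correct: the identities $\langle X_{uu},X_{v}\rangle=-\langle X_{uv},X_{u}\rangle$ and $\langle X_{vv},X_{u}\rangle=-\langle X_{uv},X_{v}\rangle$ obtained by differentiating $F\equiv 0$ are exactly what converts the naive orthogonal-projection formula into the asymmetric-looking expressions in (\ref{F9}). The paper itself gives no proof of this proposition (it is quoted from \cite{Be}), and your derivation is the standard one that the cited statement presupposes, so there is nothing to fault.
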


\begin{proposition}
\cite{Be} Let $M\subset E^{4}$ be a surface in $\mathbb{E}^{4}$ given with
the paramatrization $X\left( u,v\right) .$ Then for the basis $\left\{
X_{u},X_{v}\right\} $ of $T_{p}M,$ the Gaussian curvature\ and the mean
curvature vector of $M$ are defined as follows respectively,%
\begin{equation}
K=\frac{1}{W^{2}}\left( \left\langle
h(X_{u},X_{u}),h(X_{v},X_{v})\right\rangle -\left\langle
h(X_{u},X_{v}),h(X_{u},X_{v})\right\rangle \right)   \label{F10}
\end{equation}%
and%
\begin{equation}
\overrightarrow{H}=\frac{1}{2W^{2}}\left(
Eh(X_{v},X_{v})-2Fh(X_{u},X_{v})+Gh(X_{u},X_{u})\right) ,  \label{F11}
\end{equation}%
where $W^{2}=EG-F^{2}.$
\end{proposition}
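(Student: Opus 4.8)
The plan is to derive both formulas from two standard facts about a surface $M$ lying in the flat space $\mathbb{E}^{4}$: the Gauss equation, and the fact that the mean curvature vector is one half the trace of the second fundamental form $h$ of \eqref{A3} with respect to the induced metric.

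First, for $K$. Since $\mathbb{E}^{4}$ is flat, the Gauss equation expresses the curvature tensor $R$ of the induced connection $\nabla$ of \eqref{A2} entirely through $h$:
\[
\langle R(X_{1},X_{2})X_{3},X_{4}\rangle=\langle h(X_{1},X_{4}),h(X_{2},X_{3})\rangle-\langle h(X_{1},X_{3}),h(X_{2},X_{4})\rangle
\]
for all tangent vector fields $X_{1},X_{2},X_{3},X_{4}$. For a surface the Gaussian curvature is the sectional curvature of the tangent plane, so in the basis $\{X_{u},X_{v}\}$ of $T_{p}M$ one has
\[
K=\frac{\langle R(X_{u},X_{v})X_{v},X_{u}\rangle}{|X_{u}|^{2}|X_{v}|^{2}-\langle X_{u},X_{v}\rangle^{2}}.
\]
Setting $X_{1}=X_{4}=X_{u}$ and $X_{2}=X_{3}=X_{v}$ in the Gauss equation turns the numerator into $\langle h(X_{u},X_{u}),h(X_{v},X_{v})\rangle-\langle h(X_{u},X_{v}),h(X_{u},X_{v})\rangle$, and by \eqref{A1} the denominator equals $EG-F^{2}=W^{2}$; this is exactly \eqref{F10}.

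Next, for $\overrightarrow{H}$. By definition $\overrightarrow{H}=\tfrac12\operatorname{tr}_{g}h=\tfrac12\sum_{i,j}g^{ij}h(X_{i},X_{j})$ with $X_{1}=X_{u}$, $X_{2}=X_{v}$, where $(g^{ij})$ is the inverse of the matrix of the first fundamental form, whose entries are $g_{11}=E$, $g_{12}=g_{21}=F$, $g_{22}=G$ by \eqref{A1}. Inverting the $2\times2$ matrix gives $g^{11}=G/W^{2}$, $g^{12}=g^{21}=-F/W^{2}$, $g^{22}=E/W^{2}$; substituting these and using the symmetry of $h$ yields \eqref{F11}.

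The computations themselves are routine; the points that need care are fixing the sign and normalization conventions for $R$ and for $\overrightarrow{H}$ consistently (for instance so that the unit sphere has $K=1$), checking that the numerator of $K$ is genuinely independent of the parametrization, which follows from the tensorial character of $R$ and $h$, and carrying out the matrix inversion correctly. A convention-free alternative is to first orthonormalize $\{X_{u},X_{v}\}$ into a frame $\{e_{1},e_{2}\}$ by Gram--Schmidt, use the clean identities $K=\langle h(e_{1},e_{1}),h(e_{2},e_{2})\rangle-\langle h(e_{1},e_{2}),h(e_{1},e_{2})\rangle$ and $\overrightarrow{H}=\tfrac12\big(h(e_{1},e_{1})+h(e_{2},e_{2})\big)$, and then expand back in terms of $X_{u},X_{v}$; the only nuisance there is bookkeeping the Gram--Schmidt coefficients. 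The closest thing to a genuine obstacle is keeping the curvature-tensor sign convention aligned with the sectional-curvature normalization; once that is pinned down the statement is just a direct translation of classical submanifold identities into the present notation.
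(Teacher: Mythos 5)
This proposition is quoted in the paper from \cite{Be} and no proof is given there, so there is no in-paper argument to compare against. Your derivation is correct and is the standard one: the Gauss equation for a submanifold of the flat ambient space $\mathbb{E}^{4}$, with the sign convention fixed so that $K=\langle R(X_{u},X_{v})X_{v},X_{u}\rangle/W^{2}$, yields (\ref{F10}), and computing $\overrightarrow{H}=\tfrac{1}{2}\operatorname{tr}_{g}h$ with the inverse metric entries $g^{11}=G/W^{2}$, $g^{12}=g^{21}=-F/W^{2}$, $g^{22}=E/W^{2}$ yields (\ref{F11}); your stated convention is internally consistent (it gives $K=1$ for the unit sphere), so nothing is missing.
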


\section{Canal Surfaces According to Parallel Transport Frame in $\mathbb{E}%
^{4}$}

Let $\gamma \left( u\right) =\left( \gamma _{1}\left( u\right) ,\gamma
_{2}\left( u\right) ,\gamma _{3}\left( u\right) ,\gamma _{4}\left( u\right)
\right) \subset \mathbb{E}^{4}$ be a curve parametrized by arclength.  The canal surface according to parallel transport frame has the following parametrization:

\begin{equation}
M:X\left( u,v\right) =\gamma \left( u\right) +r\left( u\right) \left(
M_{2}\left( u\right) \cos v+M_{3}\left( u\right) \sin v\right) ,  \label{a1}
\end{equation}
where $r(u)$ is a differentiable function and $\left\{ T,M_{1},M_{2},M_{3}\right\} $ is parallel transport
frame of the curve $\gamma$ in $\mathbb{E}^{4}$.
\begin{example}
\label{A}Consider the unit speed curve $\gamma \left( u\right) =\left( a\cos
cu,a\sin cu,b\cos du,b\sin du\right) $ in $\mathbb{E}^{4},$ where $%
a^{2}c^{2}+b^{2}d^{2}=1$. Then the canal surface associated to the spine
curve $\gamma $ in $\mathbb{E}^{4}$ has the following parametrization%
\begin{eqnarray*}
X\left( u,v\right)  &=&\left( a\cos cu+\frac{r\left( u\right) }{4\kappa }%
\left\{
\begin{array}{c}
\left( ac^{2}\cos cu+\sqrt{3}\kappa bd\sin cu-2\sqrt{3}bd^{2}\cos cu\right)
\cos v \\
+\left( -\sqrt{3}ac^{2}\cos cu-3\kappa bd\sin cu-2bd^{2}\cos cu\right) \sin v%
\end{array}%
\right\} \right. , \\
&&a\sin cu+\frac{r\left( u\right) }{4\kappa }\left\{
\begin{array}{c}
\left( ac^{2}\sin cu-\sqrt{3}\kappa bd\cos cu-2\sqrt{3}bd^{2}\sin cu\right)
\cos v \\
+\left( -\sqrt{3}ac^{2}\sin cu+3\kappa bd\cos cu-2bd^{2}\sin cu\right) \sin v%
\end{array}%
\right\} , \\
&&b\cos du+\frac{r\left( u\right) }{4\kappa }\left\{
\begin{array}{c}
\left( bd^{2}\cos du-\sqrt{3}\kappa ac\sin du+2\sqrt{3}ac^{2}\cos du\right)
\cos v \\
+\left( -\sqrt{3}bd^{2}\cos du+3\kappa ac\sin du+2ac^{2}\cos du\right) \sin v%
\end{array}%
\right\} , \\
&&\left. b\sin du+\frac{r\left( u\right) }{4\kappa }\left\{
\begin{array}{c}
\left( bd^{2}\sin du+\sqrt{3}\kappa ac\cos du+2\sqrt{3}ac^{2}\sin du\right)
\cos v \\
+\left( -\sqrt{3}bd^{2}\sin du-3\kappa ac\cos du+2ac^{2}\sin du\right) \sin v%
\end{array}%
\right\} \right) ,
\end{eqnarray*}%
where $\kappa =\sqrt{k_{1}^{2}+k_{2}^{2}+k_{3}^{2}},$ $0\leq u\leq 2\pi ,$ $%
a,b,c,d$ are real constants and $c,d>0.$
\end{example}

\begin{proposition}
Let $M$ be a canal surface in $\mathbb{E}^{4}$ according to parallel
transport frame given with the parametrization in (\ref{a1}). Then the
Gaussian curvature of $M$ at point $p$ is%
\begin{equation}
K=\frac{1}{r^{2}\left( f^{2}+r^{\prime ^{2}}\right) ^{2}}(f^{4}-f^{3}-fr%
\left( fr^{\prime \prime }-gr^{\prime }\right) -f_{v}^{2}r^{\prime ^{2}}).
\label{a2}
\end{equation}
\end{proposition}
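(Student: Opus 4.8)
The plan is to work throughout in the parallel transport frame $\{T,M_{1},M_{2},M_{3}\}$, read off the first and second fundamental data directly, and then substitute into the Gaussian curvature formula (\ref{F10}). Writing $\mu(v)=M_{2}\cos v+M_{3}\sin v$ and $\mu^{\perp}(v)=-M_{2}\sin v+M_{3}\cos v$, I would first differentiate (\ref{a1}), using $\gamma'=T$ (arclength) and the parallel frame relations $M_{2}'=-k_{2}T$, $M_{3}'=-k_{3}T$ from (\ref{a01}), to obtain
\[
X_{u}=f\,T+r'\mu,\qquad X_{v}=r\,\mu^{\perp},\qquad f:=1-r(k_{2}\cos v+k_{3}\sin v).
\]
Since $\{T,M_{1},\mu,\mu^{\perp}\}$ is orthonormal, this immediately yields $E=f^{2}+r'^{2}$, $F=0$, $G=r^{2}$, and $W^{2}=r^{2}(f^{2}+r'^{2})$. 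The vanishing of $F$ is the structural fact I would exploit: it lets me invoke the Proposition containing (\ref{F9}) rather than computing the second fundamental form maps from scratch.

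Next I would differentiate once more. One gets $X_{uu}=g\,T+fk_{1}M_{1}+\bigl(\tfrac{f(1-f)}{r}+r''\bigr)\mu-\tfrac{ff_{v}}{r}\mu^{\perp}$, where $g$ is simply the $T$-component that appears, namely $g=f_{u}-r'(k_{2}\cos v+k_{3}\sin v)$, and $f_{v}=\partial f/\partial v=r(k_{2}\sin v-k_{3}\cos v)$; likewise $X_{uv}=f_{v}\,T+r'\mu^{\perp}$ and $X_{vv}=-r\,\mu$. From these I compute the inner products appearing in (\ref{F9}): $\langle X_{uu},X_{u}\rangle=ff_{u}+r'r''$, $\langle X_{uv},X_{u}\rangle=ff_{v}$, $\langle X_{uv},X_{v}\rangle=rr'$, $\langle X_{vv},X_{v}\rangle=0$. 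Plugging into (\ref{F9}) then shows that $h(X_{v},X_{v})$ and $h(X_{u},X_{v})$ are both proportional to the single normal direction $r'T-f\mu$, while $h(X_{u},X_{u})=fk_{1}M_{1}+C\,(r'T-f\mu)$ for a scalar $C$; the $\mu^{\perp}$-part of $h(X_{u},X_{u})$ cancels automatically, as it must since $h$ takes values in the normal space. I would verify $\langle h(X_{u},X_{u}),X_{u}\rangle=0$ as a consistency check on signs.

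Finally I would substitute into (\ref{F10}). Because $h(X_{u},X_{v})\parallel h(X_{v},X_{v})\perp M_{1}$, the $M_{1}$-component of $h(X_{u},X_{u})$ drops out of $\langle h(X_{u},X_{u}),h(X_{v},X_{v})\rangle$, and one is left with $\langle h(X_{u},X_{v}),h(X_{u},X_{v})\rangle=f_{v}^{2}r'^{2}/(f^{2}+r'^{2})$ and $\langle h(X_{u},X_{u}),h(X_{v},X_{v})\rangle=\tfrac{1}{f^{2}+r'^{2}}(f^{4}-f^{3}+rfr'g-rf^{2}r'')$. Dividing the difference by $W^{2}=r^{2}(f^{2}+r'^{2})$ and collecting over the common denominator $r^{2}(f^{2}+r'^{2})^{2}$ gives (\ref{a2}). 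To get there the curve quantities $k_{2},k_{3}$ must be eliminated via $r(k_{2}\cos v+k_{3}\sin v)=1-f$ and $r(k_{2}\sin v-k_{3}\cos v)=-f_{v}$; that substitution is what makes all the explicit trigonometric data collapse into $f$, $f_{v}$, $g$ and $r$.

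The main obstacle is purely computational and concentrated in the last step: evaluating $C$ (equivalently, the component of $h(X_{u},X_{u})$ along $r'T-f\mu$) and then simplifying $\langle h(X_{u},X_{u}),h(X_{v},X_{v})\rangle-\langle h(X_{u},X_{v}),h(X_{u},X_{v})\rangle$ into the compact numerator $f^{4}-f^{3}-fr(fr''-gr')-f_{v}^{2}r'^{2}$. Everything preceding it is mechanical once $F=0$ is noticed. The one point where care is required is bookkeeping the dependence of the various functions: $r,k_{1},k_{2},k_{3}$ depend on $u$ only, whereas $f$ and $g$ depend on both $u$ and $v$, so $f_{u}$ and $f_{v}$ must be handled consistently throughout.
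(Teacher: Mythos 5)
Your proposal is correct and follows essentially the same route as the paper: compute $E=f^{2}+r'^{2}$, $F=0$, $G=r^{2}$, use the $F=0$ formulas (\ref{F9}) to obtain the second fundamental forms, and substitute into (\ref{F10}); your intermediate expressions (the inner products, the fact that $h(X_u,X_v)$ and $h(X_v,X_v)$ are proportional to $r'T-f\cos v\,M_2-f\sin v\,M_3$, and the two scalar products entering $K$) all agree with (\ref{a7})--(\ref{a7**}). The only difference is the cosmetic use of the rotating basis $\mu,\mu^{\perp}$, which slightly streamlines the bookkeeping but does not change the argument.
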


\begin{proof}
Consider the parametrization (\ref{a1}). The partial derivatives of $%
X(u,v)$, which spans the tangent space of $M$, are expressed as%
\begin{eqnarray}
X_{u} &=&fT+r^{\prime }\cos vM_{2}+r^{\prime }\sin vM_{3},  \label{a3} \\
X_{v} &=&-r\sin vM_{2}+r\cos vM_{3},  \notag
\end{eqnarray}%
where $f=f\left( u,v\right) =1-k_{2}r\cos v-k_{3}r\sin v.$ Thus, the
coefficients of the first fundamental form become%
\begin{eqnarray}
E &=&\left\langle X_{u},X_{u}\right\rangle =f^{2}+r^{\prime ^{2}},  \notag \\
F &=&\left\langle X_{u},X_{v}\right\rangle =0,  \label{a4} \\
G &=&\left\langle X_{v},X_{v}\right\rangle =r^{2}.  \notag
\end{eqnarray}%
The second partial derivatives of $X(u,v)$ are expressed as follows:%
\begin{eqnarray}
X_{uu} &=&gT+fk_{1}M_{1}+\left( fk_{2}+r^{\prime \prime }\cos v\right)
M_{2}+\left( fk_{3}+r^{\prime \prime }\sin v\right) M_{3},  \notag \\
X_{uv} &=&f_{v}T-r^{\prime }\sin vM_{2}+r^{\prime }\cos vM_{3},  \label{a5}
\\
X_{vv} &=&-r\cos vM_{2}-r\sin vM_{3},  \notag
\end{eqnarray}%
where $g=g\left( u,v\right) =f_{u}-k_{2}r^{\prime }\cos v-k_{3}r^{\prime
}\sin v.$ Hence, from the equations (\ref{a3}) and (\ref{a5}), we get%
\begin{eqnarray}
\left\langle X_{uu},X_{u}\right\rangle  &=&ff_{u}+r^{\prime }r^{\prime
\prime },  \notag \\
\left\langle X_{uv},X_{u}\right\rangle  &=&ff_{v},  \label{a6} \\
\left\langle X_{uv},X_{v}\right\rangle  &=&rr^{\prime },  \notag \\
\left\langle X_{vv},X_{v}\right\rangle  &=&0.  \notag
\end{eqnarray}%
Further, by the use of equations (\ref{a3}), (\ref{a4}) and (\ref{a6}), the
second fundamental forms of $M$ become
\begin{eqnarray}
h\left( X_{u},X_{u}\right)  &=&\frac{1}{r\left( f^{2}+r^{\prime ^{2}}\right)
}\left( f^{2}r^{\prime }(f-1)-rr^{\prime }\left( fr^{\prime \prime
}-gr^{\prime }\right) \right) T  \notag \\
&&+fk_{1}M_{1}  \label{a7} \\
&&+\frac{f\cos v}{r\left( f^{2}+r^{\prime ^{2}}\right) }\left(
f^{2}-f^{3}+r\left( fr^{\prime \prime }-gr^{\prime }\right) \right) M_{2}
\notag \\
&&+\frac{f\sin v}{r\left( f^{2}+r^{\prime ^{2}}\right) }\left(
f^{2}-f^{3}+r\left( fr^{\prime \prime }-gr^{\prime }\right) \right) M_{3},
\notag \\
h\left( X_{u},X_{v}\right)  &=&\frac{f_{v}r^{\prime }}{f^{2}+r^{\prime ^{2}}}%
\left( r^{\prime }T-f\cos vM_{2}-f\sin vM_{3}\right) ,  \label{a7*} \\
h\left( X_{v},X_{v}\right)  &=&\frac{fr}{f^{2}+r^{\prime ^{2}}}\left(
r^{\prime }T-f\cos vM_{2}-f\sin vM_{3}\right) ,  \label{a7**}
\end{eqnarray}%
where $W^{2}=EG-F^{2}=r^{2}\left( f^{2}+r^{\prime ^{2}}\right) \neq 0.$ From
the equations (\ref{a7})-(\ref{a7**}) we get the result.
\end{proof}

As a conseguence of (\ref{a2}) we obtain the following result;

\begin{corollary}
Let $M$ be a tube surface with constant $r=r\left( u\right) .$ Then the
Gaussian curvature of $M$ becomes%
\begin{equation}
K=-\frac{k_{2}\cos v+k_{3}\sin v}{fr}=\frac{f-1}{fr^{2}}.  \label{a8}
\end{equation}
\end{corollary}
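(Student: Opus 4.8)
The plan is to specialize the general Gaussian curvature formula (\ref{a2}) to the tube case. First I would observe that a tube surface is precisely the case in which the radius function $r=r(u)$ is constant, so that $r'\equiv 0$ and $r''\equiv 0$ identically. Substituting these into (\ref{a2}): the term $fr\left(fr''-gr'\right)$ vanishes since both $r''$ and $r'$ are zero, the term $f_v^{2}r'^{2}$ likewise vanishes, and the denominator collapses from $r^{2}\left(f^{2}+r'^{2}\right)^{2}$ to $r^{2}f^{4}$. Hence (\ref{a2}) reduces to
\begin{equation*}
K=\frac{f^{4}-f^{3}}{r^{2}f^{4}}=\frac{f^{3}(f-1)}{r^{2}f^{4}}=\frac{f-1}{f r^{2}},
\end{equation*}
which is the second of the two claimed expressions; here one uses $f\neq 0$, which is guaranteed by the regularity condition $W^{2}=r^{2}\left(f^{2}+r'^{2}\right)=r^{2}f^{2}\neq 0$ for a tube.

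For the first equality I would recall the definition $f=f(u,v)=1-k_{2}r\cos v-k_{3}r\sin v$ used in the proof of the preceding proposition, from which $f-1=-r\left(k_{2}\cos v+k_{3}\sin v\right)$. Substituting this into $\dfrac{f-1}{fr^{2}}$ cancels one factor of $r$ and yields
\begin{equation*}
K=\frac{-r\left(k_{2}\cos v+k_{3}\sin v\right)}{f r^{2}}=-\frac{k_{2}\cos v+k_{3}\sin v}{fr},
\end{equation*}
which completes the chain of equalities in (\ref{a8}).

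There is essentially no serious obstacle here: the argument is a direct substitution of $r'=r''=0$ into (\ref{a2}) followed by elementary algebraic simplification. The only points needing a moment's care are remembering that $f$ itself still depends on $v$ (so no $f$-dependent factors may be discarded) and noting that the regularity hypothesis is what legitimizes dividing through by $f$.
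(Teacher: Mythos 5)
Your proposal is correct and follows exactly the route the paper intends: the corollary is stated as a direct consequence of the general formula (\ref{a2}), obtained by setting $r'=r''=0$ and then rewriting $f-1=-r(k_{2}\cos v+k_{3}\sin v)$. Your extra remarks about $f\neq 0$ and the $v$-dependence of $f$ are sound and only add care that the paper leaves implicit.
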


\begin{proposition}
Let $M$ be a canal surface in $\mathbb{E}^{4}$ according to parallel
transport frame given with the parametrization in (\ref{a1}). If $\gamma $
is a straight line, then the Gaussian curvature of $M$ at point $p$ is%
\begin{equation}
K=-\frac{r^{\prime \prime }}{r\left( 1+r^{\prime ^{2}}\right) ^{2}}.
\label{c1}
\end{equation}
\end{proposition}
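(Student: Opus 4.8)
The plan is to specialize the general Gaussian curvature formula (\ref{a2}) to the case in which the spine curve degenerates to a straight line, so that the proof is essentially a substitution once the frame data is pinned down. First I would observe that if $\gamma$ is a straight line, then its Frenet curvature vanishes, $\kappa \equiv 0$; since the theorem of \cite{GBGEY} gives $\kappa = \sqrt{k_{1}^{2}+k_{2}^{2}+k_{3}^{2}}$, this forces $k_{1}=k_{2}=k_{3}\equiv 0$. (Equivalently, one may argue directly from (\ref{a01}): for a line the unit tangent $T$ is constant, and the relatively parallel fields $M_{1},M_{2},M_{3}$ are then constant as well, so $T'=k_{1}M_{1}+k_{2}M_{2}+k_{3}M_{3}=0$ yields $k_{i}=0$.)

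Next I would feed this information into the auxiliary functions appearing in (\ref{a2}). By definition $f=f(u,v)=1-k_{2}r\cos v-k_{3}r\sin v$, so $k_{2}=k_{3}=0$ gives $f\equiv 1$; consequently its partial derivatives vanish, $f_{u}=f_{v}=0$, and $g=g(u,v)=f_{u}-k_{2}r^{\prime}\cos v-k_{3}r^{\prime}\sin v=0$. One should also note that regularity is preserved, since $W^{2}=r^{2}(f^{2}+r^{\prime 2})=r^{2}(1+r^{\prime 2})\neq 0$, so formula (\ref{a2}) remains valid here.

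Finally I would substitute $f=1$, $g=0$, $f_{v}=0$ into
\begin{equation*}
K=\frac{1}{r^{2}\left( f^{2}+r^{\prime ^{2}}\right) ^{2}}\bigl(f^{4}-f^{3}-fr\left( fr^{\prime \prime }-gr^{\prime }\right) -f_{v}^{2}r^{\prime ^{2}}\bigr),
\end{equation*}
which collapses to $K=\dfrac{1-1-r\,r^{\prime\prime}}{r^{2}(1+r^{\prime 2})^{2}}=-\dfrac{r^{\prime\prime}}{r(1+r^{\prime 2})^{2}}$, as claimed in (\ref{c1}). I do not anticipate any genuine obstacle: the only point requiring a small argument is the initial deduction that all three parallel transport curvatures vanish for a straight spine curve; everything after that is algebraic simplification of an already-established formula. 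As an alternative, one could bypass (\ref{a2}) entirely and recompute $X_{u},X_{v}$ and the second fundamental form from (\ref{a1}) with $T,M_{1},M_{2},M_{3}$ taken constant, then apply (\ref{F10}); this reproduces the same expression but is longer, so the specialization route is preferable.
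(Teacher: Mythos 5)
Your proof is correct, but it takes a different route from the paper. You specialize the general formula (\ref{a2}): for a straight spine curve $T'=0$, so by (\ref{a01}) (or from $\kappa=\sqrt{k_{1}^{2}+k_{2}^{2}+k_{3}^{2}}=0$) all three parallel transport curvatures vanish, whence $f\equiv 1$, $f_{v}=0$, $g=0$, and (\ref{a2}) collapses immediately to $K=-r''/\bigl(r(1+r'^{2})^{2}\bigr)$; the substitution is right and the regularity remark $W^{2}=r^{2}(1+r'^{2})\neq 0$ is appropriate. The paper instead takes the longer road you mention only as an alternative: it sets $T'=M_{1}'=M_{2}'=M_{3}'=0$ and recomputes $X_{u}$, $X_{v}$, the first fundamental form, the second partial derivatives, and the second fundamental forms $h(X_{u},X_{u})$, $h(X_{u},X_{v})=0$, $h(X_{v},X_{v})$ from scratch (equations (\ref{a11})--(\ref{a14})), then applies (\ref{F10}). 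Your specialization is more economical for this proposition alone and makes the logical dependence on the general result explicit; the paper's recomputation buys the explicit expressions (\ref{a12}) and (\ref{a14}), which are reused verbatim in the subsequent proposition on the mean curvature vector (\ref{a15}), so if you adopt your shorter proof you would still need to derive those quantities somewhere before that later result.
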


\begin{proof}
Let $\gamma $ be a straight line, then the equations of parallel transport
frame of\ $\gamma $ become%
\begin{eqnarray*}
\gamma ^{\prime }\left( u\right) &=&T\left( u\right) , \\
T^{\prime }\left( u\right) &=&0, \\
M_{1}^{\prime }\left( u\right) &=&0, \\
M_{2}^{\prime }\left( u\right) &=&0, \\
M_{3}^{\prime }\left( u\right) &=&0.
\end{eqnarray*}%
Further, the tangent space of $M$ at an arbitrary point $p=X\left(
u,v\right) $ of $M$ is spanned by%
\begin{eqnarray}
X_{u} &=&T+r^{\prime }\cos vM_{2}+r^{\prime }\sin vM_{3},  \label{a11} \\
X_{v} &=&-r\sin vM_{2}+r\cos vM_{3}.  \notag
\end{eqnarray}%
Hence the coefficients of first fundamental form become%
\begin{eqnarray}
E &=&\left\langle X_{u},X_{u}\right\rangle =1+r^{\prime ^{2}},  \notag \\
F &=&\left\langle X_{u},X_{v}\right\rangle =0,  \label{a12} \\
G &=&\left\langle X_{v},X_{v}\right\rangle =r^{2}.  \notag
\end{eqnarray}%
The second partial derivatives of $X(u,v)$ are expressed as follows:%
\begin{eqnarray}
X_{uu} &=&r^{\prime \prime }\cos vM_{2}+r^{\prime \prime }\sin vM_{3},
\notag \\
X_{uv} &=&-r^{\prime }\sin vM_{2}+r^{\prime }\cos vM_{3},  \label{a13} \\
X_{vv} &=&-r\cos vM_{2}-r\sin vM_{3}.  \notag
\end{eqnarray}%
Thus from the equations (\ref{a11}) and (\ref{a13}), we get%
\begin{eqnarray*}
\left\langle X_{uu},X_{u}\right\rangle &=&r^{\prime }r^{\prime \prime }, \\
\left\langle X_{uv},X_{u}\right\rangle &=&0, \\
\left\langle X_{uv},X_{v}\right\rangle &=&rr^{\prime }, \\
\left\langle X_{vv},X_{v}\right\rangle &=&0.
\end{eqnarray*}%
Considering the equations (\ref{a11}), (\ref{a12}) and (\ref{a13}), we
obtain the second fundamental forms of $M$ as follows:
\begin{eqnarray}
h\left( X_{u},X_{u}\right) &=&-\frac{r^{\prime \prime }}{1+r^{\prime 2}}%
\left( r^{\prime }T-\cos vM_{2}-\sin vM_{3}\right) ,  \notag \\
h\left( X_{u},X_{v}\right) &=&0,  \label{a14} \\
h\left( X_{v},X_{v}\right) &=&\frac{r}{1+r^{\prime 2}}\left( r^{\prime
}T-\cos vM_{2}-\sin vM_{3}\right) ,  \notag
\end{eqnarray}%
where $W^{2}=EG-F^{2}=r^{2}\left( 1+r^{\prime ^{2}}\right) \neq 0.$ Hence
from the equations (\ref{a14}),\ we get the result.
\end{proof}

\begin{proposition}
Let $M$ be a canal surface according to parallel
transport frame given with the parametrization (\ref{a1}) in $\mathbb{E}^{4}$. When $\gamma $
is a straight line, the surface $M$ is a flat surface if and only if $r$ is
a linear function of the form $r(u)=au+b$ for some real constants $a$, $b$.
\end{proposition}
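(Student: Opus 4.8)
The plan is to read off the claim directly from the Gaussian curvature formula established in the preceding proposition for the case when $\gamma$ is a straight line, namely
\begin{equation*}
K=-\frac{r^{\prime \prime }}{r\left( 1+r^{\prime ^{2}}\right) ^{2}}.
\end{equation*}
First I would recall that $M$ is by definition a flat surface precisely when its Gaussian curvature vanishes identically, $K\equiv 0$ on $D$. So the whole argument reduces to analysing when the right-hand side above vanishes identically.

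Next I would observe that the denominator never vanishes: $r=r(u)$ is the radius function of the family of spheres sweeping out the canal surface, hence $r(u)>0$ (in any case $r\neq 0$, which is also forced by the regularity condition $W^{2}=r^{2}(1+r^{\prime 2})\neq 0$ used in that proposition), and $1+r^{\prime 2}\geq 1>0$, so $\left(1+r^{\prime 2}\right)^{2}>0$. Therefore $K(u,v)=0$ for all $(u,v)$ if and only if $r^{\prime \prime }(u)=0$ for all $u$.

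Finally I would integrate: $r^{\prime \prime }\equiv 0$ on the interval $I$ is equivalent to $r^{\prime }\equiv a$ for some constant $a\in\mathbb{R}$, and integrating once more to $r(u)=au+b$ for some constants $a,b\in\mathbb{R}$; conversely any such linear $r$ has $r^{\prime \prime }\equiv 0$ and hence $K\equiv 0$ by the displayed formula. This establishes both implications and completes the proof.

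There is essentially no hard part here: the statement is an immediate corollary of the curvature computation in the previous proposition, and the only thing to be careful about is justifying that the denominator is nowhere zero so that $K\equiv0$ can legitimately be reduced to $r^{\prime\prime}\equiv0$. One could, if desired, add a sentence pointing out that in this situation the projection of $M$ into a suitable $\mathbb{E}^{3}$ is the classical canal surface over a line, i.e. a surface of revolution, for which flatness is the well-known condition that the profile (the radius) be an affine function.
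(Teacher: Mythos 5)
Your proof is correct and follows exactly the route the paper intends: the paper states this proposition without proof, as an immediate consequence of the formula $K=-\frac{r^{\prime\prime}}{r\left(1+r^{\prime 2}\right)^{2}}$ established in the preceding proposition, and your argument (denominator nonvanishing by regularity, hence $K\equiv 0$ iff $r^{\prime\prime}\equiv 0$ iff $r(u)=au+b$) is precisely the omitted reasoning. No discrepancies to report.
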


\begin{proposition}
Let $M$ be a canal surface in $\mathbb{E}^{4}$ according to parallel
transport frame given with the paramatrization in (\ref{a1}). Then the mean
curvature vector of $M$ at point $p$ is%
\begin{eqnarray}
\overrightarrow{H} &=&\frac{1}{2r\left( f^{2}+r^{\prime ^{2}}\right) ^{2}}%
\left\{ \left( fr^{\prime }\left( f^{2}+r^{\prime ^{2}}\right) -rr^{\prime
}(fr^{\prime \prime }-gr^{\prime })-f^{2}r^{\prime }\left( 1-f\right)
\right) T\right.  \notag \\
&&+frk_{1}\left( f^{2}+r^{\prime ^{2}}\right) M_{1}  \label{a9} \\
&&+\left( -f^{2}\cos v\left( f^{2}+r^{\prime ^{2}}\right) +f^{3}\cos v\left(
1-f\right) +fr(fr^{\prime \prime }-gr^{\prime })\cos v\right) M_{2}  \notag
\\
&&\left. +\left( -f^{2}\sin v\left( f^{2}+r^{\prime ^{2}}\right) +f^{3}\sin
v\left( 1-f\right) +fr(fr^{\prime \prime }-gr^{\prime })\sin v\right)
M_{3}\right\} .  \notag
\end{eqnarray}
\end{proposition}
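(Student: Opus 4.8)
The plan is to feed the data already assembled in the proof of the proposition giving (\ref{a2}) into the mean-curvature formula (\ref{F11}), namely $\overrightarrow{H}=\frac{1}{2W^{2}}\left(Eh(X_{v},X_{v})-2Fh(X_{u},X_{v})+Gh(X_{u},X_{u})\right)$. From (\ref{a4}) we have $E=f^{2}+r^{\prime ^{2}}$, $F=0$, $G=r^{2}$, hence $W^{2}=r^{2}\left(f^{2}+r^{\prime ^{2}}\right)$; since $F=0$ the cross term drops out and the formula collapses to
\[
\overrightarrow{H}=\frac{1}{2r^{2}\left(f^{2}+r^{\prime ^{2}}\right)}\left( \left(f^{2}+r^{\prime ^{2}}\right)h(X_{v},X_{v})+r^{2}h(X_{u},X_{u})\right).
\]
So the entire argument is a substitution of the second fundamental forms (\ref{a7}) and (\ref{a7**}) into this identity, followed by simplification.

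Concretely, I would first observe that multiplying $h(X_{v},X_{v})$ by $f^{2}+r^{\prime ^{2}}$ produces the clean vector $fr\left(r^{\prime }T-f\cos v\,M_{2}-f\sin v\,M_{3}\right)$, while multiplying $h(X_{u},X_{u})$ by $r^{2}$ clears the factor $r$ appearing in the denominators in (\ref{a7}). Adding the two resulting vectors, collecting the coefficients of $T$, $M_{1}$, $M_{2}$, $M_{3}$ separately over the common denominator $f^{2}+r^{\prime ^{2}}$, and finally dividing by $2r^{2}\left(f^{2}+r^{\prime ^{2}}\right)$ and cancelling one power of $r$, one reaches the four components displayed in (\ref{a9}). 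The $M_{1}$-component is immediate, since only $h(X_{u},X_{u})$ contributes there, giving $\frac{r^{2}fk_{1}}{2r^{2}\left(f^{2}+r^{\prime ^{2}}\right)}=\frac{frk_{1}\left(f^{2}+r^{\prime ^{2}}\right)}{2r\left(f^{2}+r^{\prime ^{2}}\right)^{2}}$, which is the stated $M_{1}$-term.

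The only care needed lies in the bookkeeping of the $T$-, $M_{2}$- and $M_{3}$-components: for the $T$-component one rewrites the term $f^{2}r^{\prime }(f-1)$ coming from $r^{2}h(X_{u},X_{u})$ as $-f^{2}r^{\prime }(1-f)$, and for the $M_{2}$- and $M_{3}$-components one uses the identity $f\left(f^{2}-f^{3}\right)=f^{3}(1-f)$ so that the $f^{3}\cos v\,(1-f)$ and $f^{3}\sin v\,(1-f)$ terms appear exactly in the claimed form. There is no conceptual obstacle; the proof is a direct substitute-and-simplify computation, and the main (minor) difficulty is simply keeping the signs and the powers of $f$ straight while merging the fractions.
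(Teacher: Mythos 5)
Your proposal is correct and is essentially the paper's own proof, which simply substitutes the second fundamental forms (\ref{a7})--(\ref{a7**}) into (\ref{F11}); you have merely spelled out the bookkeeping (the $F=0$ simplification, the common denominator $f^{2}+r^{\prime 2}$, and the sign rewrites $f^{2}r^{\prime}(f-1)=-f^{2}r^{\prime}(1-f)$ and $f(f^{2}-f^{3})=f^{3}(1-f)$) that the paper leaves implicit. No gaps.
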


\begin{proof}
Substuting the equations (\ref{a7})-(\ref{a7**}) into (\ref{F11}), we obtain
the vector given with (\ref{a9}).
\end{proof}

As a consequence of (\ref{a9}), we obtain the following results;

\begin{corollary}
Let $M$ be a canal surface in $\mathbb{E}^{4}$ according to parallel
transport frame given with the parametrization (\ref{a1}). Then the mean
curvature of $M$ at point $p$ is%
\begin{equation*}
H=\frac{1}{2r\left( f^{2}+r^{\prime ^{2}}\right) ^{\frac{3}{2}}}\left(
\begin{array}{c}
f^{2}\left( f^{2}+r^{\prime ^{2}}\right) ^{2}-2frr^{\prime ^{2}}(fr^{\prime
\prime }-gr^{\prime })-2f^{3}\left( f^{2}+r^{\prime ^{2}}\right) \left(
1-f\right) \\
+(fr^{\prime \prime }-gr^{\prime })^{2}r^{2}+2f^{2}r(fr^{\prime \prime
}-gr^{\prime })+f^{4}\left( 1-f\right) ^{2} \\
+f^{2}r^{2}k_{1}^{2}\left( f^{2}+r^{\prime ^{2}}\right) -4f^{3}r(fr^{\prime
\prime }-gr^{\prime })%
\end{array}%
\right) ^{\frac{1}{2}}.
\end{equation*}
\end{corollary}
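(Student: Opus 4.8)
The plan is to use that the mean curvature is the norm of the mean curvature vector, $H=\langle\overrightarrow{H},\overrightarrow{H}\rangle^{1/2}$, together with the fact that $\{T,M_{1},M_{2},M_{3}\}$ is an orthonormal frame along the unit speed curve $\gamma$ (it is an orthogonal rotate of the Frenet frame by the Euler angles). Writing the vector from the preceding proposition as
\[
\overrightarrow{H}=\frac{1}{2r(f^{2}+r^{\prime 2})^{2}}\bigl(AT+BM_{1}+CM_{2}+DM_{3}\bigr),
\]
with $A,B,C,D$ read off directly from (\ref{a9}), orthonormality gives at once
\[
H^{2}=\frac{A^{2}+B^{2}+C^{2}+D^{2}}{4r^{2}(f^{2}+r^{\prime 2})^{4}}.
\]

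First I would simplify the $M_{2},M_{3}$ block: both coefficients share a common scalar factor, namely $C=P\cos v$ and $D=P\sin v$ with $P=-f^{2}(f^{2}+r^{\prime 2})+f^{3}(1-f)+fr(fr^{\prime\prime}-gr^{\prime})$, so $C^{2}+D^{2}=P^{2}$. Next I would factor the tangential coefficient as $A=r^{\prime}S$ and observe that $P=-fS$, where
\[
S:=f(f^{2}+r^{\prime 2})-f^{2}(1-f)-r(fr^{\prime\prime}-gr^{\prime}).
\]
Hence $A^{2}+C^{2}+D^{2}=(r^{\prime 2}+f^{2})S^{2}$, and since $B=frk_{1}(f^{2}+r^{\prime 2})$ one factor $(f^{2}+r^{\prime 2})$ divides the whole numerator. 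This drops the exponent in the denominator from $4$ to $3$ and yields
\[
H=\frac{1}{2r(f^{2}+r^{\prime 2})^{3/2}}\bigl(S^{2}+f^{2}r^{2}k_{1}^{2}(f^{2}+r^{\prime 2})\bigr)^{1/2}.
\]

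What remains is to expand $S^{2}$ and regroup so as to match the displayed expression. Treating $S$ as $\alpha-\beta-\delta$ with $\alpha=f(f^{2}+r^{\prime 2})$, $\beta=f^{2}(1-f)$, $\delta=r(fr^{\prime\prime}-gr^{\prime})$, the three square terms give $f^{2}(f^{2}+r^{\prime 2})^{2}$, $f^{4}(1-f)^{2}$ and $r^{2}(fr^{\prime\prime}-gr^{\prime})^{2}$; the cross term $-2\alpha\beta$ gives $-2f^{3}(f^{2}+r^{\prime 2})(1-f)$; and the two cross terms $-2\alpha\delta+2\beta\delta$, after distributing $f^{2}+r^{\prime 2}$ and $1-f$, collapse to $-2frr^{\prime 2}(fr^{\prime\prime}-gr^{\prime})+2f^{2}r(fr^{\prime\prime}-gr^{\prime})-4f^{3}r(fr^{\prime\prime}-gr^{\prime})$. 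Adding back $f^{2}r^{2}k_{1}^{2}(f^{2}+r^{\prime 2})$ reproduces the bracket in the statement exactly. The only real obstacle is keeping the bookkeeping of these cross terms straight; conceptually the corollary is an immediate consequence of (\ref{a9}) and the orthonormality of the parallel transport frame.
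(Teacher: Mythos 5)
Your proposal is correct and is exactly the intended argument: the paper states this corollary as an immediate consequence of (\ref{a9}) without writing out a proof, and the computation it implicitly relies on is precisely yours, namely $H=\Vert \overrightarrow{H}\Vert$ via orthonormality of $\{T,M_{1},M_{2},M_{3}\}$, the factorizations $C^{2}+D^{2}=P^{2}$, $A=r^{\prime}S$, $P=-fS$, and the cancellation of one factor of $f^{2}+r^{\prime 2}$. Your expansion of $S^{2}$ reproduces the displayed bracket term by term, so there is nothing to add.
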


\begin{corollary}
Let $M$ be a tube surface with constant $r=r\left( u\right) .$ Then the mean
curvature vector of $M$ becomes%
\begin{equation}
\overrightarrow{H}=\frac{1}{2fr}\left(
\begin{array}{c}
rk_{1}M_{1} \\
+\left( -2f\cos v+\cos v\right) M_{2} \\
+\left( -2f\sin v+\sin v\right) M_{3}%
\end{array}%
\right) .  \label{a10}
\end{equation}
\end{corollary}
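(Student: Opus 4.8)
The plan is to obtain (\ref{a10}) as a direct specialization of the general mean curvature vector formula (\ref{a9}) to the tube case, in which $r=r(u)$ is constant and hence $r'\equiv 0$ and $r''\equiv 0$. First I would record how the auxiliary functions of Section~3 degenerate: the abbreviation $f=1-k_{2}r\cos v-k_{3}r\sin v$ is unaffected; $g=f_{u}-k_{2}r'\cos v-k_{3}r'\sin v$ reduces to $g=f_{u}$; the quantity $fr''-gr'$ vanishes identically; and $f^{2}+r'^{2}$ collapses to $f^{2}$. These are the only substitutions needed.

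Next I would feed these into the four bracketed coefficients of (\ref{a9}). The $T$-coefficient $fr'(f^{2}+r'^{2})-rr'(fr''-gr')-f^{2}r'(1-f)$ is a sum of three terms each divisible by $r'$, hence it is identically zero; thus a tube surface has vanishing tangential mean curvature component. The $M_{1}$-coefficient $frk_{1}(f^{2}+r'^{2})$ becomes $f^{3}rk_{1}$. The $M_{2}$-coefficient $-f^{2}\cos v\,(f^{2}+r'^{2})+f^{3}\cos v\,(1-f)+fr(fr''-gr')\cos v$ becomes $-f^{4}\cos v+f^{3}\cos v\,(1-f)=f^{3}\cos v\,(1-2f)$, and likewise the $M_{3}$-coefficient becomes $f^{3}\sin v\,(1-2f)$. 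Finally the prefactor $\frac{1}{2r(f^{2}+r'^{2})^{2}}$ becomes $\frac{1}{2rf^{4}}$; cancelling the common factor $f^{3}$ from the three surviving coefficients leaves precisely $\overrightarrow{H}=\frac{1}{2fr}\left(rk_{1}M_{1}+(-2f\cos v+\cos v)M_{2}+(-2f\sin v+\sin v)M_{3}\right)$, which is (\ref{a10}).

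As an alternative route, and a useful consistency check, one can bypass (\ref{a9}) and work directly from the second fundamental forms (\ref{a7})--(\ref{a7**}): setting $r'=r''=0$ there yields $h(X_{u},X_{u})=fk_{1}M_{1}+\frac{f(1-f)}{r}(\cos v\,M_{2}+\sin v\,M_{3})$, $h(X_{u},X_{v})=0$, and $h(X_{v},X_{v})=-r(\cos v\,M_{2}+\sin v\,M_{3})$; substituting these into (\ref{F11}) with $E=f^{2}$, $F=0$, $G=r^{2}$ and $W^{2}=r^{2}f^{2}$ reproduces the same vector.

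There is no genuine obstacle here: the statement is a routine specialization of an identity already established. The only care required is bookkeeping --- making sure that every term annihilated by $r'=0$ is dropped and that the common factor $f^{3}$ (equivalently, the denominator simplification $f^{2}+r'^{2}\to f^{2}$) is cancelled correctly, so that the result appears in the tidy form (\ref{a10}). One should also keep in mind that $f\neq 0$ is implicit, since it is precisely the condition ensuring $W^{2}=r^{2}f^{2}\neq 0$ and hence the regularity of the tube, in line with the standing hypothesis.
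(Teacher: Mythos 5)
Your proposal is correct and follows exactly the route the paper intends: the corollary is stated as a direct consequence of (\ref{a9}), and your substitution $r'=r''=0$ (so $fr''-gr'=0$ and $f^{2}+r'^{2}=f^{2}$), followed by cancellation of $f^{3}$, reproduces (\ref{a10}) precisely. The alternative check via (\ref{a7})--(\ref{a7**}) and (\ref{F11}) is a harmless bonus but not a different method in substance.
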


\begin{corollary}
Let $M$ be a tube surface with constant $r=r\left( u\right) .$ Then the mean
curvature of $M$ at point $p$ is%
\begin{equation*}
H=\frac{1}{2fr}\left( 4f^{2}-4f+r^{2}k_{1}^{2}+1\right) ^{\frac{1}{2}}.
\end{equation*}
\end{corollary}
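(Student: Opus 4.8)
The plan is to obtain the scalar mean curvature $H$ as the length of the mean curvature vector $\overrightarrow{H}$, using the explicit expression for a tube surface already recorded in (\ref{a10}). Since $H=\bigl\langle\overrightarrow{H},\overrightarrow{H}\bigr\rangle^{1/2}$ and $\{T,M_{1},M_{2},M_{3}\}$ is an orthonormal frame along $\gamma$ (the coefficient matrix in (\ref{a01}) is skew-symmetric, so the parallel transport frame stays orthonormal), all mixed inner-product terms vanish and it suffices to sum the squares of the three components of $\overrightarrow{H}$ appearing in (\ref{a10}).

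First I would read off from (\ref{a10}) that
\[
2fr\,\overrightarrow{H}=rk_{1}M_{1}+(1-2f)\cos v\,M_{2}+(1-2f)\sin v\,M_{3},
\]
so that, by orthonormality of $\{M_{1},M_{2},M_{3}\}$,
\[
4f^{2}r^{2}\,H^{2}=r^{2}k_{1}^{2}+(1-2f)^{2}\cos^{2}v+(1-2f)^{2}\sin^{2}v .
\]
Next I would collect the last two terms: factoring out $(1-2f)^{2}$ and applying $\cos^{2}v+\sin^{2}v=1$ gives the $v$-independent quantity $(1-2f)^{2}=4f^{2}-4f+1$. Hence
\[
4f^{2}r^{2}\,H^{2}=r^{2}k_{1}^{2}+4f^{2}-4f+1 ,
\]
and taking the positive square root and dividing by $2fr$ yields exactly the claimed formula $H=\frac{1}{2fr}\left(4f^{2}-4f+r^{2}k_{1}^{2}+1\right)^{1/2}$.

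The computation is essentially mechanical, so I anticipate no genuine obstacle. The only point requiring care is the orthonormality of the parallel transport frame, which is what guarantees both that $\overrightarrow{H}$ carries no cross-product contributions and that its $M_{1}$, $M_{2}$, $M_{3}$ coordinates square and add cleanly; the resulting simplification is what removes all explicit dependence on $v$, leaving an expression in $f$, $r$ and $k_{1}$ alone.
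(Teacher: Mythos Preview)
Your argument is correct and is exactly the route the paper intends: the corollary is stated immediately after (\ref{a10}) with no separate proof, so the implicit derivation is precisely to compute $H=\lVert\overrightarrow{H}\rVert$ from (\ref{a10}) using orthonormality of $\{M_{1},M_{2},M_{3}\}$ and the identity $\cos^{2}v+\sin^{2}v=1$. There is nothing to add.
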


\begin{proposition}
Let $M$ be a canal surface in $\mathbb{E}^{4}$ according to parallel
transport frame given with the parametrization in (\ref{a1}). If $\gamma $
is a straight line then the mean curvature vector of $M$ at point $p$ is
\begin{equation}
\overrightarrow{H}=\frac{1+r^{\prime ^{2}}-rr^{\prime \prime }}{2r\left(
1+r^{\prime ^{2}}\right) ^{2}}\left( r^{\prime }T-\cos vM_{2}-\sin
vM_{3}\right) .  \label{a15}
\end{equation}
\end{proposition}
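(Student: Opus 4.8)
The plan is to specialize the mean curvature vector formula (\ref{F11}) to the straight-line case, using data that has essentially already been assembled in the proof of the preceding proposition on the Gaussian curvature of $M$ when $\gamma$ is a line. First I would record that a straight line has $\kappa=0$, hence $k_{1}=k_{2}=k_{3}=0$, so that $f\equiv 1$ and $g\equiv 0$, and the parallel transport frame satisfies $T^{\prime}=M_{1}^{\prime}=M_{2}^{\prime}=M_{3}^{\prime}=0$, exactly as in the earlier computation.

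Next I would take the first fundamental form coefficients from (\ref{a12}), namely $E=1+r^{\prime ^{2}}$, $F=0$, $G=r^{2}$, $W^{2}=r^{2}(1+r^{\prime ^{2}})$, together with the second fundamental forms from (\ref{a14}), and substitute them into (\ref{F11}). Since $F=0$ the middle term $-2Fh(X_{u},X_{v})$ drops out; moreover both $h(X_{u},X_{u})$ and $h(X_{v},X_{v})$ in (\ref{a14}) are scalar multiples of the single vector $r^{\prime}T-\cos vM_{2}-\sin vM_{3}$, so $\overrightarrow{H}$ is automatically proportional to it, and no $M_{1}$-component appears.

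The only remaining work is to simplify the scalar coefficient $\tfrac{1}{2W^{2}}\bigl(E\cdot\tfrac{r}{1+r^{\prime ^{2}}}+G\cdot(-\tfrac{r^{\prime\prime}}{1+r^{\prime ^{2}}})\bigr)$, which collapses to $\tfrac{r(1+r^{\prime ^{2}})-r^{2}r^{\prime\prime}}{2r^{2}(1+r^{\prime ^{2}})^{2}}=\tfrac{1+r^{\prime ^{2}}-rr^{\prime\prime}}{2r(1+r^{\prime ^{2}})^{2}}$, giving (\ref{a15}). As a cross-check, one can instead set $f=1$, $g=0$, $k_{1}=0$ directly in the general formula (\ref{a9}): the $M_{1}$-term vanishes, the $T$-term becomes $r^{\prime}(1+r^{\prime ^{2}}-rr^{\prime\prime})$, and the $M_{2}$- and $M_{3}$-terms become $-\cos v(1+r^{\prime ^{2}}-rr^{\prime\prime})$ and $-\sin v(1+r^{\prime ^{2}}-rr^{\prime\prime})$, so after cancelling the common factor the same expression results. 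There is no conceptual obstacle; the points requiring care are merely tracking the signs of the $M_{2}$- and $M_{3}$-components so that the vector $r^{\prime}T-\cos vM_{2}-\sin vM_{3}$ emerges with the stated signs, and verifying that one factor of $r$ cancels correctly against the $r^{2}$ coming from $W^{2}$.
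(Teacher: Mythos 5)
Your proposal is correct and follows exactly the paper's route: the paper's proof is the one-line instruction to substitute (\ref{a12}) and (\ref{a14}) into (\ref{F11}), and you carry out precisely that substitution (with the correct scalar simplification), adding only a consistency check against (\ref{a9}). Nothing is missing.
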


\begin{proof}
Considering the equations (\ref{a12}) and (\ref{a14}), we obtain the
solution.
\end{proof}

\begin{corollary}
Let $M$ be a canal surface in $\mathbb{E}^{4}$ according to parallel
transport frame given with the parametrization in (\ref{a1}). If $\gamma $
is a straight line then the mean curvature of $M$ at point $p$ is%
\begin{equation}
H=\frac{r^{\prime ^{2}}-r^{\prime \prime }r+1}{2r\left( 1+r^{\prime
^{2}}\right) ^{\frac{3}{2}}}.  \label{d1}
\end{equation}
\end{corollary}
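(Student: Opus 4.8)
The plan is to derive the stated formula for $H$ directly from the mean curvature vector $\overrightarrow{H}$ given in the immediately preceding Proposition, specializing to the case where $\gamma$ is a straight line. Since the surface lies in $\mathbb{E}^{4}$, the scalar mean curvature is $H=\left\Vert \overrightarrow{H}\right\Vert$, so I would compute the Euclidean norm of the vector in equation~(\ref{a15}). The key observation is that $\{T,M_{1},M_{2},M_{3}\}$ is an orthonormal frame along $\gamma$, so the norm of $r^{\prime }T-\cos vM_{2}-\sin vM_{3}$ is simply $\sqrt{r^{\prime 2}+\cos^{2}v+\sin^{2}v}=\sqrt{1+r^{\prime 2}}$.

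First I would recall, from the Proposition just above, that when $\gamma$ is a straight line,
\[
\overrightarrow{H}=\frac{1+r^{\prime 2}-rr^{\prime\prime}}{2r\left(1+r^{\prime 2}\right)^{2}}\left(r^{\prime }T-\cos vM_{2}-\sin vM_{3}\right).
\]
Then I would take norms on both sides: the scalar prefactor comes out (up to sign), and using orthonormality of the parallel transport frame the remaining vector factor has norm $\left(1+r^{\prime 2}\right)^{1/2}$. Hence
\[
H=\left\vert\frac{1+r^{\prime 2}-rr^{\prime\prime}}{2r\left(1+r^{\prime 2}\right)^{2}}\right\vert\left(1+r^{\prime 2}\right)^{\frac{1}{2}}=\frac{\left\vert 1+r^{\prime 2}-rr^{\prime\prime}\right\vert}{2r\left(1+r^{\prime 2}\right)^{\frac{3}{2}}},
\]
which, dropping the absolute value (consistent with the sign convention used elsewhere in the paper, e.g. the tube-surface corollary) and reordering terms, is exactly equation~(\ref{d1}).

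Alternatively, and perhaps more in the spirit of how the other corollaries are phrased, one could bypass $\overrightarrow{H}$ and instead compute $H$ as the trace of the shape-type data: from (\ref{a12}) and (\ref{a14}) one has $E=1+r^{\prime 2}$, $F=0$, $G=r^{2}$, and the three second fundamental form vectors $h(X_{u},X_{u})$, $h(X_{u},X_{v})=0$, $h(X_{v},X_{v})$ are all scalar multiples of the fixed unit-direction combination $N:=\frac{1}{\sqrt{1+r^{\prime 2}}}\left(r^{\prime }T-\cos vM_{2}-\sin vM_{3}\right)$; substituting into (\ref{F11}) and taking norms gives $H=\frac{1}{2W^{2}}\left\vert E\,\langle h(X_{v},X_{v}),N\rangle+G\,\langle h(X_{u},X_{u}),N\rangle\right\vert$ with $W^{2}=r^{2}(1+r^{\prime 2})$, and a short simplification reproduces (\ref{d1}).

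There is no real obstacle here: the only thing to be careful about is the sign/absolute-value convention, since $1+r^{\prime 2}-rr^{\prime\prime}$ need not be positive in general, and the bookkeeping that the three orthonormal basis vectors $T,M_{2},M_{3}$ contribute $r^{\prime 2}+\cos^{2}v+\sin^{2}v=1+r^{\prime 2}$ to the squared norm. Everything else is an immediate substitution into (\ref{a15}) (or into (\ref{F11}) via (\ref{a14})), so the proof is essentially one line, matching the terse style of the "Proof." that the paper is about to give.
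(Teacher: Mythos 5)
Your proposal is correct and follows exactly the route the paper intends: the corollary is stated as an immediate consequence of the mean curvature vector formula (\ref{a15}), and taking the Euclidean norm using orthonormality of $\{T,M_{2},M_{3}\}$ gives $\Vert r^{\prime}T-\cos v\,M_{2}-\sin v\,M_{3}\Vert=(1+r^{\prime 2})^{1/2}$ and hence (\ref{d1}). Your remark about the absolute value is a fair point of care, but otherwise the argument matches the paper's (unstated, one-line) derivation.
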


\begin{proposition}
Let $M$ be a canal surface in $\mathbb{E}^{4}$ according to parallel
transport frame given with the parametrization in (\ref{a1}). If $\gamma $
is a straight line, the surface $M$ is minimal if and only if
\begin{equation*}
2r+2\sqrt{r^{2}-c_{1}^{2}}=e^{\frac{u}{c_{1}}+c_{2}}.
\end{equation*}
\end{proposition}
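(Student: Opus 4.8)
The plan is to reduce the statement \textquotedblleft$M$ is minimal\textquotedblright\ to an ordinary differential equation in $r(u)$ and then integrate that equation explicitly. By formula (\ref{d1}), when the spine curve $\gamma$ is a straight line the mean curvature of $M$ is
\[
H=\frac{r^{\prime 2}-r^{\prime\prime}r+1}{2r\bigl(1+r^{\prime 2}\bigr)^{3/2}} .
\]
Since $r>0$ and $1+r^{\prime 2}>0$ everywhere, $H$ vanishes identically if and only if $r$ solves the second order equation $r\,r^{\prime\prime}-r^{\prime 2}-1=0$; call this equation $(\star)$. Thus the whole task becomes: show that the general solution of $(\star)$ is precisely the one-parameter family implicitly described by $2r+2\sqrt{r^{2}-c_{1}^{2}}=e^{u/c_{1}+c_{2}}$.

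Next I would solve $(\star)$ by reduction of order, exploiting that $u$ does not occur explicitly. Put $p=r^{\prime}$ and regard $p$ as a function of $r$, so that $r^{\prime\prime}=p\,\dfrac{dp}{dr}$. Then $(\star)$ becomes the separable first order equation $r\,p\,\dfrac{dp}{dr}=p^{2}+1$, i.e.\ $\dfrac{p\,dp}{p^{2}+1}=\dfrac{dr}{r}$. Integrating gives $\tfrac12\ln(p^{2}+1)=\ln r-\ln c_{1}$ for a constant $c_{1}>0$, hence $r^{\prime 2}+1=r^{2}/c_{1}^{2}$ and therefore
\[
r^{\prime}=\pm\frac{1}{c_{1}}\sqrt{r^{2}-c_{1}^{2}} ,
\]
which forces $r>c_{1}$; this is the implicit domain restriction that makes the square root in the final formula meaningful.

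Then I would integrate this first order equation by separating variables once more: $\dfrac{dr}{\sqrt{r^{2}-c_{1}^{2}}}=\pm\dfrac{du}{c_{1}}$, and using $\displaystyle\int\frac{dr}{\sqrt{r^{2}-c_{1}^{2}}}=\ln\bigl(r+\sqrt{r^{2}-c_{1}^{2}}\bigr)+\mathrm{const}$ I obtain $\ln\bigl(r+\sqrt{r^{2}-c_{1}^{2}}\bigr)=\pm u/c_{1}+\mathrm{const}$. The two sign choices correspond to replacing $c_{1}$ by $-c_{1}$, which leaves $c_{1}^{2}$ (hence the square root) unchanged, so in either case, exponentiating and multiplying by $2$ absorbs the two integration constants into a single constant $c_{2}$ and yields exactly $2r+2\sqrt{r^{2}-c_{1}^{2}}=e^{u/c_{1}+c_{2}}$. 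For the converse, I would differentiate this relation, recover $r^{\prime}=\tfrac{1}{c_{1}}\sqrt{r^{2}-c_{1}^{2}}$ and hence $r^{\prime 2}+1=r^{2}/c_{1}^{2}$, differentiate once more to get $(\star)$, and conclude $H\equiv 0$ from (\ref{d1}).

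All the manipulations are elementary integrations, so I do not anticipate a genuine obstacle; the only points needing care are the bookkeeping of the several integration constants (and checking that both sign choices for $r^{\prime}$ collapse to the same implicit relation after renaming $c_{1}$) and recording the tacit hypothesis $r>c_{1}$ under which the stated formula is meaningful.
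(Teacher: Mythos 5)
Your proposal is correct and follows essentially the same route as the paper: reduce minimality via (\ref{d1}) to $r^{\prime 2}-rr^{\prime\prime}+1=0$, substitute $p=r^{\prime}$ to get the separable equation $\frac{p\,dp}{p^{2}+1}=\frac{dr}{r}$, obtain $r^{2}=c_{1}^{2}(p^{2}+1)$, and integrate $\frac{dr}{\sqrt{r^{2}-c_{1}^{2}}}=\frac{du}{c_{1}}$ once more. Your treatment is in fact somewhat more careful than the paper's, since you track both sign choices for $r^{\prime}$, note the domain restriction $r>c_{1}$, and verify the converse direction explicitly.
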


\begin{proof}
Let $M$ is minimal. Then from the equation (\ref{d1}), $r^{\prime
^{2}}-r^{\prime \prime }r+1=0.$ If we take $r^{\prime }=p\left( u\right) $,
the last equation becomes%
\begin{equation}
\frac{dr}{r}=\frac{pdp}{p^{2}+1}.  \label{a16}
\end{equation}%
The solution of the equation (\ref{a16}) is as follows:%
\begin{equation*}
r^{2}=\left( p^{2}+1\right) c_{1}^{2}.
\end{equation*}%
Again taking $p\left( u\right) =r^{\prime }$, we obtain the following
ordinary differential equation:%
\begin{equation*}
\frac{dr}{\sqrt{r^{2}-c_{1}^{2}}}=\frac{du}{c_{1}}.
\end{equation*}
Integrating both sides of the last equation, we get the solution.
\end{proof}

As a consequence of (\ref{a15}), we obtain the following result;

\begin{proposition}
Let $M$ be a tube surface in $\mathbb{E}^{4}$ according to parallel
transport frame given with the parametrization in (\ref{a1}). If $\gamma $
is a straight line, the mean curvature vector of $M$ at point $p$ is
\end{proposition}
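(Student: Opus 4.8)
The plan is to obtain the tube case as the specialization $r\equiv\mathrm{const}$ of the straight-line mean curvature vector formula (\ref{a15}). A tube surface has constant radius by definition, so $r^{\prime}=0$ and $r^{\prime\prime}=0$; substituting these into (\ref{a15}), the numerator $1+r^{\prime 2}-rr^{\prime\prime}$ collapses to $1$, the factor $(1+r^{\prime 2})^{2}$ in the denominator becomes $1$, and the $r^{\prime}T$ term disappears. I therefore expect the conclusion
\begin{equation*}
\overrightarrow{H}=-\frac{1}{2r}\left( \cos v\, M_{2}+\sin v\, M_{3}\right) .
\end{equation*}

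First I would record that the straight-line hypothesis on $\gamma$, together with the parallel transport equations (\ref{a01}), forces $T^{\prime}=0$ and hence $k_{1}=k_{2}=k_{3}=0$; in particular $f=1-k_{2}r\cos v-k_{3}r\sin v=1$. This is exactly the setting in which (\ref{a15}) was derived — the $T$- and $M_{1}$-components of $X_{uu}$ already dropped out there, cf. (\ref{a13})--(\ref{a14}) — so no new geometric input is needed. Then the single substitution $r^{\prime}=r^{\prime\prime}=0$ into (\ref{a15}) produces the displayed vector; there is nothing to compute beyond the cancellations.

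As a cross-check I would reconcile this with the general tube expression (\ref{a10}): imposing $k_{1}=0$ in (\ref{a10}), and then $f=1$ (both being consequences of $\gamma$ being a straight line), leaves $\overrightarrow{H}=\frac{1}{2r}\bigl(-\cos v\, M_{2}-\sin v\, M_{3}\bigr)$, which is the same answer. The only point that demands a moment's care — and it is the closest thing to an obstacle in this statement — is remembering that ``$\gamma$ a straight line'' annihilates all three principal curvatures, so that $f\equiv 1$, rather than only $k_{1}$; once that observation is in place the proposition follows immediately from the substitution above.
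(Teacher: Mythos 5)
Your proposal is correct and follows exactly the paper's route: the paper states this proposition "as a consequence of (\ref{a15})," i.e., by substituting $r^{\prime}=r^{\prime\prime}=0$ into the straight-line mean curvature vector formula, which is precisely what you do. Your additional cross-check against (\ref{a10}) with $k_{1}=0$ and $f=1$ is a harmless bonus not present in the paper.
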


\begin{equation*}
\overrightarrow{H}=\frac{1}{2r}\left( -\cos vM_{2}-\sin vM_{3}\right) .
\end{equation*}

\begin{corollary}
Let $M$ be a tube surface in $\mathbb{E}^{4}$ according to parallel
transport frame given with the parametrization in (\ref{a1}). If $\gamma $
is a straight line, $M$ has constant mean curvature of the form%
\begin{equation*}
H=\frac{1}{2r}=\text{constant}.
\end{equation*}
\end{corollary}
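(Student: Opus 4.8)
The plan is to obtain $H$ simply as the length of the mean curvature vector already produced by the preceding proposition. By that proposition, for a tube surface (so $r=r(u)$ is constant) whose spine $\gamma$ is a straight line, one has
\[
\overrightarrow{H}=\frac{1}{2r}\left(-\cos v\,M_{2}-\sin v\,M_{3}\right).
\]
Since $\left\{T,M_{1},M_{2},M_{3}\right\}$ is the parallel transport frame of $\gamma$, it is an orthonormal frame of $\mathbb{E}^{4}$; in particular $M_{2}$ and $M_{3}$ are orthonormal. Therefore
\[
H=\left\|\overrightarrow{H}\right\|=\frac{1}{2r}\sqrt{\cos^{2}v+\sin^{2}v}=\frac{1}{2r}.
\]
Because $r$ is assumed constant for a tube surface, the right-hand side is independent of both $u$ and $v$, so $H$ is constant, which is exactly the assertion.

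As an equivalent route, one can start instead from the Corollary that gives $H=\dfrac{r^{\prime 2}-r^{\prime\prime}r+1}{2r\left(1+r^{\prime 2}\right)^{3/2}}$ for a canal surface over a straight-line spine, and then impose $r^{\prime}=r^{\prime\prime}=0$; the numerator collapses to $1$ and the denominator to $2r$, yielding $H=\tfrac{1}{2r}$ at once.

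Either way this is a one-line verification, so there is no genuine obstacle; the only point that must be invoked is the orthonormality of the parallel transport frame, which ensures that the coefficient vector $\left(-\cos v,-\sin v\right)$ of $\overrightarrow{H}$ in the $\left(M_{2},M_{3}\right)$-plane has unit norm, and that $r$ being constant for a tube makes $\tfrac{1}{2r}$ a genuine constant rather than merely a function of $u$.
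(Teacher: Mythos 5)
Your proof is correct and follows essentially the same route as the paper, which states this as an immediate corollary of the preceding proposition giving $\overrightarrow{H}=\frac{1}{2r}\left(-\cos v\,M_{2}-\sin v\,M_{3}\right)$; taking the norm using the orthonormality of $M_{2},M_{3}$ yields $H=\frac{1}{2r}$, constant since $r$ is constant for a tube. Your alternative derivation from the formula $H=\frac{r^{\prime 2}-r^{\prime\prime}r+1}{2r(1+r^{\prime 2})^{3/2}}$ with $r^{\prime}=r^{\prime\prime}=0$ is an equally valid cross-check.
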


\begin{proposition}
Let $M$ be a canal surface in $\mathbb{E}^{4}$ according to parallel
transport frame given with the parametrizetion in (\ref{a1}). If $\gamma $
is a straight line, then $M$ is a Weingarten surface.
\end{proposition}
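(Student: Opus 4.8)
The plan is to establish the Weingarten property in the form recalled in the introduction, i.e.\ the identical vanishing of the Jacobian determinant $\left\vert \frac{\partial \left( K,H\right) }{\partial \left( u,v\right) }\right\vert =K_{u}H_{v}-K_{v}H_{u}$.

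First I would collect the two curvature formulas already proved for the straight-line case: from the Proposition with conclusion (\ref{c1}) we have
\[
K=-\frac{r^{\prime \prime }}{r\left( 1+r^{\prime ^{2}}\right) ^{2}},
\]
and from the Corollary with conclusion (\ref{d1}) we have
\[
H=\frac{r^{\prime ^{2}}-r^{\prime \prime }r+1}{2r\left( 1+r^{\prime ^{2}}\right) ^{\frac{3}{2}}}.
\]
The point to emphasize is \emph{why} these are so simple: when $\gamma $ is a straight line the parallel transport equations give $T^{\prime }=M_{1}^{\prime }=M_{2}^{\prime }=M_{3}^{\prime }=0$, hence $k_{1}=k_{2}=k_{3}=0$ and $f=1-k_{2}r\cos v-k_{3}r\sin v\equiv 1$; consequently every coefficient appearing in the first and second fundamental forms (see (\ref{a12}) and (\ref{a14})) is free of $v$, and therefore so are $K$ and $H$, which depend only on $u$ through $r,r^{\prime },r^{\prime \prime }$.

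Since $K$ and $H$ do not depend on $v$, we have $K_{v}\equiv 0$ and $H_{v}\equiv 0$, so that
\[
\left\vert \frac{\partial \left( K,H\right) }{\partial \left( u,v\right) }\right\vert =K_{u}H_{v}-K_{v}H_{u}=0
\]
identically on $M$. By the characterization of Weingarten surfaces recalled in the introduction, this is exactly the assertion that $M$ is a Weingarten surface, which completes the argument. There is no real obstacle here: the only step that requires care is the reduction $f\equiv 1$ forced by the straight-line hypothesis, since it is this reduction that strips away all $v$-dependence; granting that, the Jacobian identity is immediate. One could instead try to eliminate $r^{\prime \prime }$ between the two displayed formulas to exhibit an explicit relation $U(K,H)=0$, but the resulting expression still involves $r$ and $r^{\prime }$, so the Jacobian route is the clean one.
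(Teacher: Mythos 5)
Your proposal is correct and follows essentially the same route as the paper: it cites the straight-line formulas (\ref{c1}) and (\ref{d1}), observes that $K$ and $H$ depend only on $u$, and concludes that $K_{u}H_{v}-K_{v}H_{u}=0$. The extra remark explaining that $k_{1}=k_{2}=k_{3}=0$ forces $f\equiv 1$ is a helpful clarification but does not change the argument.
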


\begin{proof}
Considering the equations (\ref{c1}) and (\ref{d1}), we see that $K$ and $H$
are functions of the variable $u.$ So%
\begin{equation*}
K_{v}=0=H_{v},
\end{equation*}%
which means $K_{u}H_{v}-K_{v}H_{u}=0.$
\end{proof}

\begin{proposition}
Let $M$ be a tube surface in $\mathbb{E}^{4}$ according to parallel
transport frame given with the parametrization in (\ref{a1}). If $\gamma $
is a straight line, then $M$ is a \ linear Weingarten surface.
\end{proposition}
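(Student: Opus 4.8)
The plan is to reduce everything to the two explicit curvature expressions already obtained for a straight spine curve and then specialize them to constant radius. First I would invoke the Proposition that gives equation (\ref{c1}): when $\gamma$ is a straight line, the canal surface has Gaussian curvature $K=-\dfrac{r''}{r\left(1+r'^{2}\right)^{2}}$. For a tube surface $r=r(u)$ is constant, so $r'=r''=0$, and this formula yields $K\equiv 0$; that is, a tube over a straight line is flat. Next I would invoke the Corollary following equation (\ref{a15}) (the one stating $H=\tfrac{1}{2r}$ for a tube over a straight line), so that $H$ is identically equal to the real constant $\tfrac{1}{2r}$.

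The remaining step is to exhibit a nontrivial linear relation $aK+bH=c$ with $(a,b,c)\neq(0,0,0)$, as required by the definition of a linear Weingarten surface recalled in the Introduction. Since $K\equiv 0$, one may simply take $(a,b,c)=(1,0,0)$, giving $aK+bH=K=0=c$; alternatively, since $H\equiv\tfrac{1}{2r}$ is constant, $(a,b,c)=\bigl(0,1,\tfrac{1}{2r}\bigr)$ also works, giving $aK+bH=H=\tfrac{1}{2r}=c$. Either choice certifies that $M$ is linear Weingarten, which completes the argument.

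I do not anticipate any real obstacle: the entire content sits in the previously proved formulas for $K$ and $H$ in the straight-line case, and the only observation needed is that constancy of $r$ forces both $K$ and $H$ to be constants, which trivially satisfy a linear Weingarten identity. If one wishes the conclusion to look less degenerate, a minor elaboration would be to remark that a surface with constant Gaussian curvature (or constant mean curvature) is automatically linear Weingarten, and a tube over a line enjoys both properties simultaneously.
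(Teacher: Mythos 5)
Your proposal is correct and follows essentially the same route as the paper: both arguments reduce to the facts that $K=0$ and $H=\tfrac{1}{2r}$ for a tube over a straight line and then exhibit constants $(a,b,c)\neq(0,0,0)$ satisfying $aK+bH=c$. The only cosmetic difference is that you derive $K=0$ explicitly from (\ref{c1}) with $r'=r''=0$ and offer the triples $(1,0,0)$ or $\bigl(0,1,\tfrac{1}{2r}\bigr)$, whereas the paper cites $K=0$ directly and writes the solution family as $(t,2rk,k)$.
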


\begin{proof}
Let $M$ be a tube surface in $\mathbb{E}^{4}$ according to parallel
transport frame given with the parametrization in (\ref{a1}) and $\gamma $
is a straight line, then we know that $K=0$ and $H=\frac{1}{2r}.$ Then for $%
a,b,c\in \mathbb{R}$, we get
\begin{equation*}
a.0+b.\frac{1}{2r}=c,
\end{equation*}%
which has the solution $\left( a,b,c\right) =\left( t,2rk,k\right) ;t,k\in
\mathbb{R}$.
\end{proof}

\section{Visualization}

Canal surfaces are very popular in geometric modelling. In this section, we
visualize the surfaces given with the patch
\begin{equation*}
X(u,v)=(x(u,v),y(u,v),z(u,v),w(u,v))
\end{equation*}%
in $\mathbb{E}^{4}$ by the use of Maple Program. We plot the graph of the
given surface by using maple plotting command%
\begin{equation}
plot3d([x,y,z+w],u=a..b,v=c..d);  \label{C0}
\end{equation}

After than, we construct some 3D geometric shape models by using the canal
surfaces defined in Example \ref{A} for the following values;%
\begin{eqnarray*}
a)\text{ \ }r(u) &=&2u+6, \\
b)\text{ \ }r(u) &=&u^{2}, \\
c)\text{ \ }r(u) &=&cosu^{2}.
\end{eqnarray*}%
We plot the graph of the projection of these surfaces in $\mathbb{E}^{3}$ by
the use of plotting command (\ref{C0}) (see, Figure 1).

\begin{figure}[thbp]
\centering
\includegraphics[height=8cm,width =8cm,angle=0]{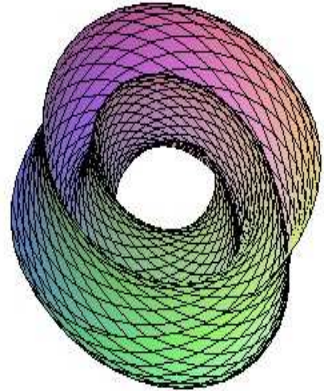}
\caption{Canal surfaces with $r(u)=2u+6$}
\end{figure}

\begin{figure}[!htb]
\begin{center}
\includegraphics[width=8cm]{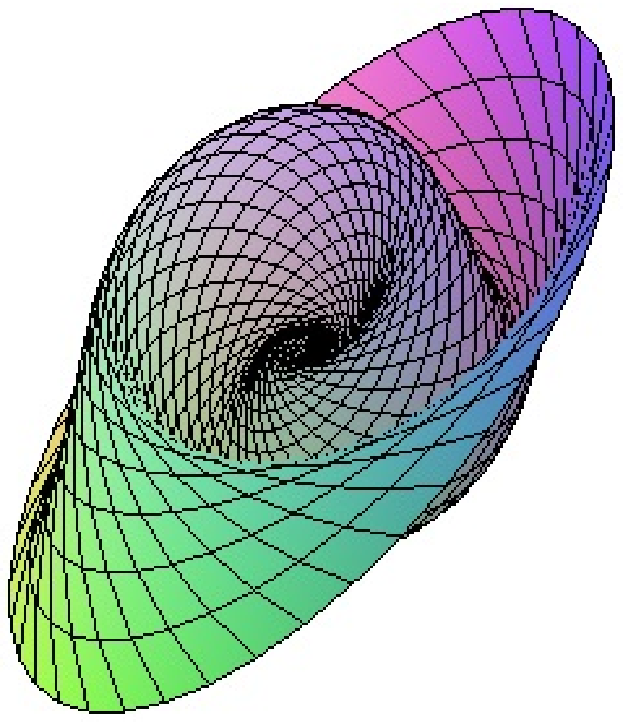}
\end{center}
\caption{Canal surfaces with $r(u)=u^{2}$}
\end{figure}

\begin{figure}[!htb]
\begin{center}
\includegraphics[width=8cm]{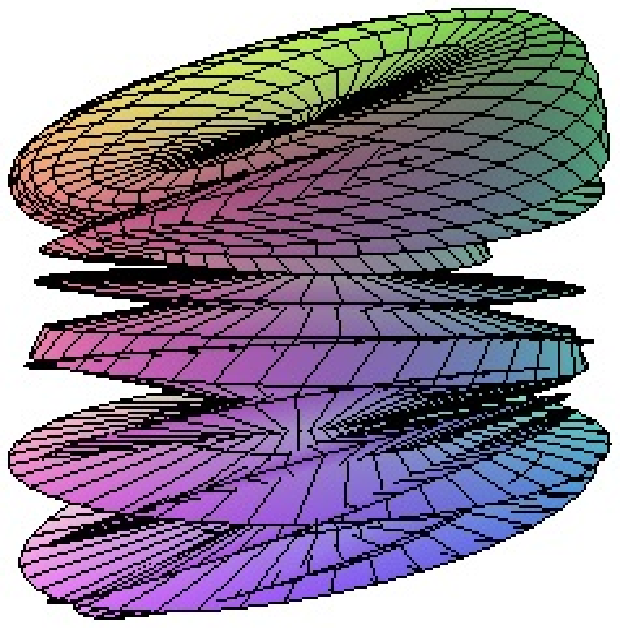}
\end{center}
\caption{Canal surfaces with $r(u)=cosu^{2}$}
\end{figure}


\begin{thebibliography}{99}
\bibitem{B} Bishop LR. There is more than one way to frame a curve. Amer
Math Monthly 1975; 82: 246-251.

\bibitem{Be} Bulca B. A characterization of surfaces in $\mathbb{E}^{4}$.
PhD, Uluda\u{g} University, Bursa, TURKEY, 2012.

\bibitem{BABO} Bulca B, Arslan K, Bayram B, \"{O}zt\"{u}rk G. Canal surfaces
in 4-dimensional Euclidean space, Preprint.

\bibitem{C} Do Carmo PM. Differential Geometry of Curves and Surfaces.
Englewood Cliffs, NJ, USA: Prentice-Hall, 1976.

\bibitem{FS} Farouki RT, Sverrissor R. Approximation of rolling-ball blends
for free-form parametric surfaces. Computer-Aided Design 1996, 28: 871-878.

\bibitem{FN} Farouki RT, Neff CA. Analytic properties of plane offset
curves. Computer-Aided Geometric Design 1990, 7: 83-99.

\bibitem{FN1} Farouki RT Neff CA. Algebraic properties of plane offset
curves, Computer-Aided Geometric Design 1990, 7: 101-127.

\bibitem{GBGEY} G\"{o}k\c{c}elik F, Bozkurt Z, G\"{o}k I, Ekmekci FN,
Yayl\i\ Y. Parallel transport frame in 4-dimensional Euclidean space $%
\mathbb{E}^{4}$. Caspian J of Math Sci 2014, 3: 91-103.

\bibitem{KB} Karacan MK, B\"{u}kc\"{u} B. On natural curvatures of Bishop
frame. Journal of Vectorial Relativity 2010, 5: 34-41.

\bibitem{MPSY} Maekawa T, Patrikalakis NM, Sakkalis T, Yu G. Analysis and
applications of pipe Surfaces. Computer-Aided Geometric Design 1998, 15:
437-58.

\bibitem{RY} Ro SJ, Yoon DW. Tubes of Weingarten types in a Euclidean
3-Space. Journal of the Chungcheong Mathematical Society 2009, 22: 359-366.

\bibitem{SB} Shani U, Ballard DH. Splines as embeddings for generalized
cylinders, Computer Vision Graphics and Image Processing 1984, 27: 129-156.

\bibitem{S} Shene CK. Blending two cones with Dupin cyclids. Computer-Aided
Geometric Design 1998, 15: 643-73.

\bibitem{WB} Wang L, Ming CL, Blackmore D. Generating sweep solids for NC
verification using the SEDE method. Proceedings of the Fourth Symposium on
Solid Modeling and Applications; 14-16 May 1995; Atlanta. Georgian: pp.
364-375.
\end{thebibliography}
\end{document}